\documentclass[a4paper,11pt]{article}
\usepackage[
a4paper,
top=1in,
bottom=1in,
left=1in,
right=1in]{geometry}
\usepackage[scaled=.92]{helvet}

\usepackage{hyperref}

\usepackage{graphicx} 
\usepackage{ytableau}
\usepackage{mathrsfs}
\usepackage{tikz-cd}
\usepackage{amsmath}
\usepackage{quiver}
\usepackage[dvipsnames]{xcolor}
\usepackage{amsthm}
\usepackage{amssymb}
\usepackage{comment}
\usepackage{float}
\renewcommand\labelenumi{\upshape(\roman{enumi})}
\renewcommand\theenumi\labelenumi

\newtheorem{theorem}{Theorem}[section]
\newtheorem{lemma}[theorem]{Lemma}

\newtheorem{proposition}[theorem]{Proposition}
\newtheorem{conjecture}[theorem]{Conjecture}

\theoremstyle{definition} 
\newtheorem{example}[theorem]{Example}

\newtheorem{remark}[theorem]{Remark}
\newtheorem{definition}[theorem]{Definition}
\newtheorem{claim}[theorem]{Claim}

\title{Extending the Bipartite Parking Space}
\date{}
\author{Dora Woodruff}

\begin{document}

\maketitle

\begin{abstract}
    We prove an analogue of a theorem of Berget and Rhoades about extending the parking space $\mathrm{Park}_n$ to an $S_{n+1}$-module $\mathrm{Slim}_n$. Specifically, we show that the \textit{bipartite parking space} $\mathrm{Park}_{K_{n,m}}$, which naturally comes with an $S_{n-1} \times S_m$ action, extends to an $S_n \times S_m$-representation $\mathrm{Slim}_{n,m}$. We then formulate a conjecture generalizing this statement to \textit{any} simple graph.
\end{abstract}

\section{Introduction}

Let $G$ be a graph with a distinguished vertex $v$. Its automorphism group is $\mathrm{aut}(G)$, and $\mathrm{aut}_v(G)$ is the subgroup of automorphisms fixing $v$. To any such $G$, we can associate two important representations. 

First, there is the \textit{parking space} $\mathrm{Park}_G$, which is a representation of $\mathrm{aut}_v(G)$. This $\mathrm{Park}_G$ is spanned by Postnikov and Shapiro's \textit{graphical parking functions} \cite{postnikovshapiro}, and its dimension is the number of spanning trees of $G$. 

Second, there is an $\mathrm{aut}(G)$-representation which we denote by $\mathrm{Slim}_G$ (Definition \ref{def:slim}). This space arises from the notion of \textit{slim subgraphs}. Postnikov and Shapiro \cite{postnikovshapiro} have studied $\mathrm{Slim}_G$ as a graded vector space, but not as a representation. 

When $G$ is the complete graph $K_n$, $\mathrm{Park}_G$ is a central object in algebraic combinatorics: the (classical) parking space $\mathrm{Park}_{n-1}$, where $S_{n-1}$ acts by permuting entries of length $n-1$ parking functions \cite{parkingfirst}. The space $\mathrm{Park}_{n-1}$ is connected to diagonal coinvariant rings, Catalan combinatorics, rational Cherednik algebras, and much more (see \cite{cherednik}, \cite{hyperplanes}, \cite{haiman}). Motivated by these rich connections, many variants of classical parking functions have been studied. For example, Armstrong, Loehr, and Warrington \cite{rationalpf} defined and studied \textit{rational parking functions}. 

On the other hand, even when $G = K_n$, $\mathrm{Slim}_{K_n} := \mathrm{Slim}_n$ is less well understood. For example, it is still an open problem to give a nice expression for the graded Frobenius series of $\mathrm{Slim}_{n}$. A central difficulty is that $\mathrm{Slim}_{n}$ does not have an obvious $S_n$-invariant basis (while $\mathrm{Park}_n$ does). Nonetheless, Berget and Rhoades \cite{bergetrhoades} were able to prove that $\mathrm{Slim}_{n}$ extends the parking space $\mathrm{Park}_{n-1}$:

\begin{theorem}[Berget-Rhoades] \label{thm:berget-rhoades}
    Embed $S_{n-1}$ into $S_n$ by letting $S_{n-1}$ act on the first $n-1$ letters. Then,
    \[\mathrm{Res}^{S_n}_{S_{n-1}}(\mathrm{Slim}_{n}) \simeq_{S_{n-1}} \mathrm{Park}_{n-1}\]
\end{theorem}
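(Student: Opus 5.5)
The plan is to realize both sides as ungraded quotients of a polynomial ring: $\mathrm{Slim}_n$ will appear as an associated graded (top-degree) ring, and $\mathrm{Park}_{n-1}$ as the coordinate ring of a finite point set. The key fact is that passing from an inhomogeneous ideal to its top-degree ideal does not change the ungraded $S_{n-1}$-module. I take $\mathrm{Slim}_n$ to be spanned by the polynomials $\prod_{\{i,j\}\notin H}(x_i-x_j)$ over slim subgraphs $H$ of $K_n$, following Postnikov--Shapiro.

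\textbf{Step 1 (pass to $n-1$ variables).} Every spanning polynomial is a product of differences $x_i-x_j$, so $\mathrm{Slim}_n$ sits inside $\mathbb{C}[x_1-x_n,\dots,x_{n-1}-x_n]$. Specializing $x_n=0$ is injective on this subring and commutes with the action of $S_{n-1}$ on the first $n-1$ letters. Hence $\mathrm{Res}^{S_n}_{S_{n-1}}\mathrm{Slim}_n$ is isomorphic to an $S_{n-1}$-stable homogeneous subspace $V\subset\mathbb{C}[x_1,\dots,x_{n-1}]$.

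\textbf{Step 2 ($V$ as an inverse system).} By the Postnikov--Shapiro duality, $V$ is the Macaulay inverse system (under the apolarity pairing $f\otimes g\mapsto f(\partial)g|_{0}$) of a homogeneous ideal $J$. The ideal $J$ is generated by powers of linear forms $x_S^{d_S+1}$, where $x_S=\sum_{i\in S}x_i$ for $\emptyset\ne S\subseteq[n-1]$ and $d_S$ is the number of edges of $K_n$ from $S$ to its complement. Since the apolarity pairing is $S_{n-1}$-equivariant and perfect, $V\cong (\mathbb{C}[x]/J)^*$ as $S_{n-1}$-modules. The quotient $\mathbb{C}[x]/J$ will turn out to be a permutation module, hence self-dual, so it suffices to show $\mathbb{C}[x]/J\cong\mathrm{Park}_{n-1}$ ungraded.

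\textbf{Step 3 (deformation to points).} Let $P\subset\mathbb{Z}^{n-1}$ be the set of $0$-indexed parking functions of length $n-1$. It is $S_{n-1}$-stable, and its ideal $I(P)$ satisfies $\mathbb{C}[x]/I(P)\cong\mathbb{C}P=\mathrm{Park}_{n-1}$. Take the associated graded $\mathrm{gr}\,I(P)$, spanned by top-degree components. The standard filtration argument shows $\mathbb{C}[x]/\mathrm{gr}\,I(P)\cong\mathbb{C}[x]/I(P)$ as ungraded $S_{n-1}$-modules, and both have dimension $|P|=n^{n-2}$. It therefore suffices to prove $J\subseteq \mathrm{gr}\,I(P)$. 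Equality then follows by dimension, because $\dim\mathbb{C}[x]/J$ equals the number of spanning trees of $K_n$, which is $n^{n-2}$ by Postnikov--Shapiro.

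To prove the inclusion, for each $S$ I will exhibit an explicit polynomial $f_S$ with top-degree part $x_S^{d_S+1}$ that vanishes on $P$. The natural candidate is $f_S=\prod_{c}(x_S-c)$, with $c$ ranging over the possible values of $\sum_{i\in S}a_i$ for $a\in P$. The parking condition (sorted entries satisfy $a_{(k)}\le k-1$) bounds $\sum_{i\in S}a_i$ from below by $\binom{|S|}{2}$ and from above by roughly $\sum_{k}(n-1-k)$ over the top $|S|$ slots. This gives a window of consecutive values whose size should be exactly $d_S+1=|S|(n-|S|)+1$. Checking that this count matches precisely, rather than being off by one or requiring a different family of vanishing polynomials, is the main obstacle. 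If the linear-sum window is too big, I would instead take products mixing $x_S$ with shifted variables, or restrict to the Postnikov--Shapiro monomial generators and use a Gröbner-type argument on $P$.
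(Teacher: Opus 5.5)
\textbf{Gap: the inclusion $J\subseteq \mathrm{gr}\,I(P)$ in Step 3 is false for $n\ge 4$.} Your whole argument rests on this inclusion, so no choice of vanishing polynomials can complete Step 3: not shifted products, not a Gr\"obner-type argument.

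The set $P$ of parking functions is an order ideal in $\mathbb{N}^{n-1}$. If $b\notin P$, then no $a\in P$ satisfies $a\ge b$ componentwise. So the falling factorial $\prod_i x_i(x_i-1)\cdots(x_i-b_i+1)$ vanishes on $P$ and has top-degree part $x^b$. Hence $\mathrm{gr}\,I(P)$ contains every monomial $x^b$ with $b\notin P$, that is, it contains the Postnikov--Shapiro monomial ideal $\mathcal{I}_{K_n}$. Comparing dimensions gives $\mathrm{gr}\,I(P)=\mathcal{I}_{K_n}$. Deforming to the points of $P$ therefore just returns $\mathrm{Park}_{n-1}$ itself, and your inclusion would force the power ideal $J$ to equal $\mathcal{I}_{K_n}$.

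It does not. Take $n=4$ and $S=\{1,2,3\}$, so $d_S=3$ and $(x_1+x_2+x_3)^3\in J$. This polynomial contains the monomial $3x_1^2x_2$, and $(2,1,0)$ is a parking function, so it is not in $\mathcal{I}_{K_4}$. Your window count is detecting exactly this failure. The sum $\sum_{i\in S}a_i$ takes every value from $0$ (not $\binom{|S|}{2}$) up to $\tfrac{|S|(2n-|S|-3)}{2}$. That is $|S|(n-|S|)+\binom{|S|-1}{2}$ values, which is too many as soon as $|S|\ge 3$.

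\textbf{What is missing.} The actual content of the theorem is an $S_{n-1}$-equivariant comparison between the non-monomial object $\mathrm{Slim}_n$ (equivalently $\mathbb{C}[x]/J$) and the monomial quotient $\mathbb{C}[x]/\mathcal{I}_{K_n}$. Orbit harmonics on a lower set of lattice points always produces a monomial ideal, so it cannot supply this comparison. The paper cites the result rather than proving it, but the method it describes, and carries out for $K_{n,m}$, supplies exactly this step. The map $\phi$ (include $\mathrm{Slim}_n$, set $x_n=0$, project to $\mathbb{C}[x]/\mathcal{I}_{K_n}$) is equivariant. It is shown surjective by attaching to each nonincreasing parking function $\vec{a}$ a slim polynomial whose image has $x^{\vec{a}}$ as leading term under a suitable order: graded reverse lexicographic for $K_n$, and the row-column order for $K_{n,m}$.

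\textbf{Setup errors.} Two further corrections are needed.
\begin{enumerate}
\item $\mathrm{Slim}_n$ is spanned by $\prod_{e\in H}p(e)$ over slim $H$, not by products over the edges outside $H$.
\item The Postnikov--Shapiro generators are $x_S^{d_S}$, not $x_S^{d_S+1}$. For $n=3$ your $J$ has colength $7$ instead of $3$.
\end{enumerate}
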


More recently, Konvalinka and Tewari \cite{tewari} conjectured a more explicit description of $\mathrm{Slim}_{n}$, motivated by understanding the Berget-Rhoades extension. In particular, the representation they construct does have an explicit $S_n$-invariant basis, and its character is thus more readily computable. 

The goal of this article is to prove an analogue of the Berget-Rhoades extension for complete bipartite graphs. (In fact, we conjecture that an analogue of the Berget-Rhoades extension should hold for \textit{all} graphs, see Conjecture \ref{conj}). As we will see, the combinatorics of $\mathrm{Park}_{K_{n,m}} := \mathrm{Park}_{n,m}$ is quite rich. The classical $\mathrm{Park}_{n}$ is governed by Catalan combinatorics; on the other hand, the combinatorics of $\mathrm{Park}_{n,m}$ is governed by \textit{Narayana} combinatorics. Using Narayana-inspired tools, we prove our main result:

\begin{theorem}\label{thm:main}
    Embed $S_{n-1}$ into $S_n$ by letting $S_{n-1}$ act on the first $n$ letters. Then:
    \[\mathrm{Res}^{S_n \times S_m}_{S_{n-1} \times S_m} (\mathrm{Slim}_{K_{n,m}}) \simeq_{S_{n-1} \times S_m} \mathrm{Park}_{K_{n,m}}\]
\end{theorem}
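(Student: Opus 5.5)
Both sides are ungraded representations (or graded, with matching gradings), so I will identify their characters on $S_{n-1}\times S_m$. Here $S_{n-1}$ is the stabilizer of the distinguished vertex $v$ among the $n$ left vertices.

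\textbf{Parking side.} $\mathrm{Park}_{K_{n,m}}$ has the basis of $G$-parking functions for $G=K_{n,m}$ rooted at $v$. The group $S_{n-1}\times S_m$ permutes these functions, so $\mathrm{Park}_{K_{n,m}}$ is a permutation representation. I would decompose it into orbits and record, for each orbit, the stabilizer: a Young subgroup $S_\alpha\times S_\beta$ determined by the multiset of values the function takes on the remaining left vertices and on the right vertices. Its Frobenius characteristic is then
\[
\sum h_\alpha(x)\,h_\beta(y),
\]
summed over orbit types. These orbit types are what I expect to be counted by Narayana-type lattice paths, with the parking condition on $K_{n,m}$ becoming a two-colored ballot condition.

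\textbf{Slim side.} For $\mathrm{Slim}_{K_{n,m}}$ (Definition \ref{def:slim}) there is no invariant basis. I would therefore work with its presentation as a space of products of edge variables $\prod_{e\in H}x_e$ over slim subgraphs $H$, or as the quotient of a power ideal. This presentation is acted on by the full $S_n\times S_m$.

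The plan is to find an $S_{n-1}\times S_m$-stable filtration of $\mathrm{Slim}_{K_{n,m}}$ whose associated graded pieces are permutation modules matching the parking orbits. Concretely:
\begin{enumerate}
\item Choose a term order that prefers edges at $v$, and filter by the degree (or structure) at $v$ of the leading slim subgraph.
\item On each filtration layer, show that the leading monomials form an $S_{n-1}\times S_m$-stable set. This makes each graded piece a permutation module.
\item Biject the orbits of these leading monomials with the orbits of parking functions, preserving stabilizers. The natural map sends a slim subgraph to the parking function recording, for each vertex, its degree (or indegree) complement. Postnikov--Shapiro already establish this correspondence on the level of dimensions (and graded dimensions), and it commutes with the group action.
\end{enumerate}
Over characteristic $0$, a filtration splits, so equal associated graded characters give isomorphic representations. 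That proves Theorem \ref{thm:main}.

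\textbf{Main obstacle.} The hard step is step (ii): the leading-term set must actually be invariant under $S_{n-1}\times S_m$, since Gröbner bases generally do not respect symmetry. If a direct invariance argument fails, I would fall back on a character computation. For each $(\sigma,\tau)\in S_{n-1}\times S_m$, compute the trace on $\mathrm{Slim}_{K_{n,m}}$ via the fixed points $\mathrm{Slim}^{\langle\sigma,\tau\rangle}$. These are identified with the slim space of a quotient multigraph: the cycles of $\sigma$ together with $\{v\}$ on one side and the cycles of $\tau$ on the other, carrying weighted edges, which is again a complete bipartite shape. Its dimension is a weighted spanning-tree count, computable by the bipartite matrix-tree formula $n^{m-1}m^{n-1}$ generalized to weights. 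I would then match this against the number of parking functions fixed by $(\sigma,\tau)$, computed via the Narayana-type cycle-lemma count. For $K_n$, Berget--Rhoades do exactly this kind of comparison.
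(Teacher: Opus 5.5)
Your proposal has a genuine gap. In both routes the step that carries all the content is missing, and the assertions meant to fill it are false as stated.

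\textbf{Main route.} You leave step (ii) open. Step (iii) relies on an equivariant correspondence that does not exist. Postnikov--Shapiro match $\mathrm{Slim}_G$ and $\mathrm{Park}_G$ only as graded vector spaces, through a basis that depends on a linear order of the vertices and is not $\mathrm{aut}(G)$-stable. As soon as $K_{n,m}$ is not a tree, it has more connected spanning subgraphs than spanning trees, so the $p(H)$ are linearly dependent. Hence ``slim subgraph $\mapsto$ degree complement'' cannot be a bijection onto parking functions. Moreover, a filtration built from a term order is not $S_{n-1}\times S_m$-stable. So even a $G$-stable set of leading monomials would not give you an associated graded module carrying a group action.

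\textbf{Fallback.} First, $\dim\mathrm{Slim}^{\langle g\rangle}$ is not the trace of $g$. It has to be compared with the number of $\langle g\rangle$-orbits of parking functions, not with the number of fixed ones. This part is repairable, since these groups have rational characters. Second, and fatally, the quotient-multigraph identification already fails for $K_{2,2}$, with right vertices $1,2$, left vertices $3,4$, $v=4$, and $\tau=(1\,2)$. Here $\mathrm{Slim}$ consists of the constants plus all linear forms with coefficient sum $0$. So $\dim\mathrm{Slim}^{\langle\tau\rangle}=3$ and the trace is $2$. The quotient, however, is the path $3$--$\{1,2\}$--$4$ with both edges doubled, whose weighted spanning-tree count is $4$. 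Also, as the paper recalls, Berget--Rhoades do not compute fixed points; they prove triangularity for an explicit map.

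\textbf{Missing idea.} You need neither a $G$-stable filtration nor a character computation. Setting $x_{n+m}=0$ and reducing modulo the monomial ideal $\mathcal{I}_{K_{n,m}}$ gives a map $\phi_{n,m}:\mathrm{Slim}_{n,m}\to\mathrm{Park}_{n,m}$, which is automatically $S_{n-1}\times S_m$-equivariant. By the Postnikov--Shapiro dimension count, it is an isomorphism once it is surjective. Since its image is a submodule, it suffices to show, for each nonincreasing $\vec a$, that $x^{\vec a}$ lies in the image modulo monomials $x^{\vec b}$ with $\mathrm{sort}(\vec b)$ smaller in a suitable order.

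The real content is then combinatorial. From the grid-square labeling of $(P_{\vec a},Q_{\vec a})$, the paper builds a slim subgraph $H(\vec a)$. It then proves $\phi_{n,m}(p(\vec a))=\pm x^{\vec a}+\sum c_{\vec a,\vec b}x^{\vec b}$ with $\mathrm{sort}(\vec b)\prec_{n,m}\vec a$ for the row--column order. Your proposal has no candidate for either the elements or the order.

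Your leading-term instinct can be recast in this spirit. Suppose some term order gives every nonzero element of $\mathrm{Slim}_{n,m}|_{x_{n+m}=0}$ a leading monomial outside $\mathcal{I}_{K_{n,m}}$. Then $\phi_{n,m}$ is injective, and no invariance of leading terms is needed. Proving such a statement, however, is exactly the combinatorics that is absent.
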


The rest of the paper is structured as follows. Section \ref{sec:background} fixes notation and discusses some background on $K_{n,m}$-parking functions. In particular, we explain a useful lattice path model of Snider and Yan \cite{U} (originally described in the language of $(p,q)$-parking functions, but we translate it to the equivalent setting of $K_{n,m}$-parking functions). This lattice path model shows that the number of orbits in $\mathrm{Park}_{K_{n,m}}$ is the Narayana number $N_{n, n+m}$, a fact previously discovered by Cori and Poulalhon \cite{narayana}. Section \ref{sec:proof} gives the proof of Theorem \ref{thm:main}. The technical heart of this proof is a new partial order on pairs of nonintersecting lattice paths, described in Subsection \ref{subsec:partial}. We end with some remarks on extending Theorem \ref{thm:main} to more general graphs. 

\section{Background}\label{sec:background}

\subsection{Graphical parking functions and Postnikov-Shapiro algebras}

In this subsection, we review Postnikov and Shapiro's definition of \textit{graphical parking functions}. We recall how they viewed $\mathrm{Park}_G$ as a certain quotient of $\mathbb{C}[x_1, \dots x_{n}]$. Then, we review their definition of $\mathrm{Slim}_G$. 

Throughout, let $G$ be a (simple, undirected) graph with vertex set labeled by $V(G) = [n+1]$ and distinguished vertex $n+1$. If $S \subseteq V(G)$ and $s \in S$, we set $d_S(s)$ as the number of edges with endpoints $(s, s')$, where $s' \notin S$ (the number of edges going from $s$ to outside of $S$).

\begin{definition}[Graphical parking functions]\label{def:graphical_parking}
    A \textit{$G$-parking function} is a tuple $(a_1, a_2 \dots a_n)$ such that for every $S \subseteq (V(G)\setminus \{v\})$, there exists $s \in S$ such that 
    \[a_s < d_S(s)\]
\end{definition}

When $G = K_{n+1}$, the $G$-parking functions are exactly the classical length-$n$ parking functions. From Definition \ref{def:graphical_parking}, it is clear that any $G$-automorphism fixing the distinguished vertex $v$ preserves $G$-parking functions.  

Postnikov and Shapiro also gave an algebraic reformulation of Definition \ref{def:graphical_parking}:

\begin{definition}
    Let $\mathcal{I}_G = \langle m_S \rangle$ be the monomial ideal of $\mathbb{C}[x_1 \dots x_n]$ generated by $m_S$, where 
    \[m_S = \prod_{s \in S} x_s^{d_S(s)}\]
    as $S$ ranges over all subsets of $V(G) \setminus \{v\}$. Then, define
    \[\mathrm{Park}_G = \mathbb{C}[x_1, \dots, x_n]/\mathcal{I}_G\]
\end{definition}

For example, when $G = K_{n+1}$, $\mathrm{Park}_G$ is the quotient of $\mathbb{C}[x_1, \dots, x_n]$ by all substaircase monomials. Monomials of the form $x^{\vec{a}}$, where $\vec{a}$ is a $G$-parking function, form a linear basis for $\mathrm{Park}_{G}$. Therefore, $\mathrm{Park}_G$ comes with an action of $\mathrm{aut}_v(G)$ that permutes this monomial basis. 

Having defined $\mathrm{Park}_G$, we now define $\mathrm{Slim}_G$. A subgraph $H \subseteq G$ is called \textit{slim in $G$} if its edge complement in $G$ is connected. To an edge $e$ of $G$, we associate the polynomial 
\[p(e) = x_i - x_j\]
where $i < j$ are the labels of the endpoint of $e$. Then, we can associate a polynomial $p(H)$ to a subgraph $H$:
\[p(H) := \prod_{e \in H} p(e)\]

\begin{definition}\label{def:slim}
    The algebra $\mathrm{Slim}_G$ is the $\mathbb{C}$-linear subspace of $\mathbb{C}[x_1, \dots, x_n]$ spanned by 
    \[\{p(H)| H \text{ slim in } G\}\]
\end{definition}

Note that $p(H)$ is homogeneous, with degree the number of edges in $H$; therefore, $\mathrm{Slim}_G$ is naturally a \textit{graded} algebra. Note also that \textit{all} automorphisms of $G$ preserve the property of slimness; therefore, $\mathrm{Slim}_G$ is naturally an $\mathrm{aut}(G)$-representation. 

Postnikov-Shapiro studied $\mathrm{Park}_G$ and $\mathrm{Slim}_G$ as graded vector spaces. They proved: 

\begin{theorem}[Postnikov-Shapiro]
    The dimensions of both $\mathrm{Slim}_G$ and $\mathrm{Park}_G$ are equal to the number of spanning trees of $G$. 
\end{theorem}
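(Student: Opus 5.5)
The statement is the Postnikov--Shapiro theorem, which we quote rather than reprove in full. Still, the natural route is as follows. The plan is to prove separately that $\dim \mathrm{Park}_G$ and $\dim \mathrm{Slim}_G$ each equal the number $\tau(G)$ of spanning trees. Both arguments go by deletion--contraction, since $\tau(G)=\tau(G\setminus e)+\tau(G/e)$ for a non-loop edge $e$. A cleaner alternative, which we pursue in parallel, is a duality/Gröbner argument: exhibit $\mathrm{Slim}_G$ as the Macaulay inverse system of a power ideal whose quotient has a monomial basis indexed by $G$-parking functions.

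\emph{Step 1 (Park side).} Since $\mathcal{I}_G$ is monomial, $\dim \mathrm{Park}_G$ equals the number of $G$-parking functions. Apply Dhar's burning algorithm, started from $v=n+1$. Given $\vec a$, repeatedly burn any unburnt vertex $s$ whose number of edges into the burnt set exceeds $a_s$. The definition in Definition~\ref{def:graphical_parking} says exactly that every unburnt set $S$ contains such an $s$, so $\vec a$ is parking iff everything burns. Recording, for each vertex, the edge along which it burned (via a fixed tie-breaking order) produces a spanning tree. Conversely, one reconstructs $\vec a$ from the tree together with the "delay" data, and a standard counting argument (Chebikin--Pylyavskyy style) makes this a bijection. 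The alternative is to check that the parking count satisfies the same deletion--contraction recursion by splitting on whether $a_s$ hits the top value allowed along edge $e$.

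\emph{Step 2 (Slim side).} Define the power ideal $\mathcal{J}_G=\langle p_S^{\,D_S}\rangle$, where $p_S=\sum_{s\in S}x_s$ and $D_S$ is the number of edges leaving $S$. Show that $\mathrm{Slim}_G$ is exactly the space of polynomials annihilated by $\mathcal{J}_G$ under the differentiation pairing $x_i\mapsto\partial_i$. One direction is concrete. If $H$ is slim, consider the edges of $H$ leaving a set $S$. The derivative $\partial_{p_S}$ kills $p(e)$ for edges inside or outside $S$ and is $\pm1$ on crossing edges. Connectivity of the complement forces at least one non-$H$ crossing edge, so $H$ has fewer than $D_S$ crossing edges and $\partial_{p_S}^{D_S}p(H)=0$. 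This gives $\mathrm{Slim}_G\subseteq \mathcal{J}_G^\perp$. Then show the initial ideal of $\mathcal{J}_G$ under a suitable term order contains $\mathcal{I}_G$ (the leading term of $p_S^{D_S}$ relates to $m_S$ after a triangular change of variables). This yields $\dim \mathcal{J}_G^\perp\le \dim\mathrm{Park}_G=\tau(G)$.

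\emph{Step 3 (lower bound, the main obstacle).} We need $\dim\mathrm{Slim}_G\ge\tau(G)$, i.e. enough linearly independent $p(H)$. Complements of spanning trees are slim, but these are too many to be independent. The plan is to choose, via a fixed edge order, complements of trees with no external activity (or an analogous "internally passive" family) numbering $\tau(G)$. Then prove independence by a leading-monomial argument, or by deletion--contraction: $p(H)$ for $H\ni e$ factors through $p(e)$, and setting $x_i=x_j$ relates the remaining terms to $\mathrm{Slim}_{G/e}$. Establishing this independence cleanly, so that the inequalities $\tau(G)\le\dim\mathrm{Slim}_G\le\dim\mathcal{J}_G^\perp\le\tau(G)$ close up, is the delicate step. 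The same chain simultaneously gives the graded statement.
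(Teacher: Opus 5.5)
The paper gives no proof of this statement. It quotes Postnikov--Shapiro and remarks only that they build an explicit basis of $\mathrm{Slim}_G$, so your outline has to stand on its own. Its architecture is the right one. Your argument that $\mathrm{Slim}_G\subseteq\mathcal J_G^\perp$ is correct, once you set the root variable to $0$; this is harmless because slim polynomials are translation invariant.

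\textbf{The gap is the upper bound in Step 2.} You claim that some term order (possibly after a change of variables) gives $\mathrm{in}(\mathcal J_G)\supseteq\mathcal I_G$. This is false in general. Under any term order the leading monomial of $p_S^{D_S}$ is the pure power $x_{\max S}^{D_S}$, and the mixed monomials $m_S$ need not be reachable at all.

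Take $G=K_4$ with non-root vertices $1,2,3$. Both $(\mathcal J_G)_3=\mathrm{span}(x_1^3,x_2^3,x_3^3,(x_1+x_2+x_3)^3)$ and $(\mathcal I_G)_3=\mathrm{span}(x_1^3,x_2^3,x_3^3,x_1x_2x_3)$ are $4$-dimensional, so containment would force $\mathrm{in}(\mathcal J_G)_3=(\mathcal I_G)_3$. But modulo the cubes, the fourth generator is $3\sum_{a\neq b}x_a^2x_b+6x_1x_2x_3$. If $x_1$ is the largest variable, then $x_1^2x_2>x_1x_2x_3$, so $x_1x_2x_3$ is never a leading monomial.

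A short apolarity argument shows that no linear change of coordinates rescues this. The only cubes in $(\mathcal J_G)_3$ are the four generators, and the required leading monomials would force three of the forms $x_1,x_2,x_3,x_1+x_2+x_3$ to become linearly dependent. So $\mathcal I_G$ is a degeneration of $\mathcal J_G$ but not a Gr\"obner one, and your chain has no valid proof of $\dim\mathcal J_G^\perp\le\tau(G)$.

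\textbf{A route that works.} Do deletion--contraction directly on inverse systems, allowing multigraphs; this also settles the step you flag as delicate. For a non-loop edge $e$ with linear form $\ell_e$, let $\rho$ be restriction to $\ell_e=0$.
\begin{itemize}
\item Multiplication by $\ell_e$ maps $\mathcal J_{G\setminus e}^\perp$ injectively into $\mathcal J_G^\perp$.
\item $\rho$ maps $\mathcal J_G^\perp$ into $\mathcal J_{G/e}^\perp$. The cuts of $G/e$ are the cuts $S$ of $G$ not crossed by $e$, with the same $D_S$, and $p_S(\partial)$ commutes with $\rho$ for those $S$.
\item $\mathcal J_G^\perp\cap\ker\rho=\ell_e\mathcal J_{G\setminus e}^\perp$. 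With $P=p_S(\partial)$ and $\sigma=P(\ell_e)=\pm1$ one has $P^k(\ell_e g)=\ell_eP^kg+k\sigma P^{k-1}g$. In coordinates where $P=\partial_{y_1}$ and $\ell_e=\sigma y_1$, the vanishing of this for $k=D_S(G)$ says $y_1\partial_{y_1}u=-ku$ with $u=P^{k-1}g$, which forces $u=0$.
\end{itemize}
Hence $\dim\mathcal J_G^\perp\le\dim\mathcal J_{G\setminus e}^\perp+\dim\mathcal J_{G/e}^\perp$, and induction gives $\le\tau(G)$ with no comparison to $\mathcal I_G$. Step 1 is then needed only for the $\mathrm{Park}_G$ half of the theorem.

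The same two maps give the lower bound without any explicit basis. If $H$ is slim in $G\setminus e$, then $H\cup\{e\}$ is slim in $G$. If $H'$ is slim in $G/e$, then $H'$ is slim in $G$ and $\rho(p(H'))=p_{G/e}(H')$. So $\dim\mathrm{Slim}_G\ge\dim\mathrm{Slim}_{G\setminus e}+\dim\mathrm{Slim}_{G/e}$.

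If you prefer the explicit-basis route of Step 3, note that ``complements of trees with no external activity'' gives only $T_G(1,0)<\tau(G)$ elements whenever $G$ has a cycle. The family you want has one member per spanning tree, $H_T=E(G)\setminus(T\cup\mathrm{EA}(T))$, and its independence still has to be proved.

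Finally, in Step 1 the burning bijection must send $\vec a$ to a tree alone, with the ``delay'' read off from the tree and the search order. Otherwise it is not a bijection with spanning trees.
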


To prove this theorem, Postnikov and Shapiro constructed, for any linear ordering of $V(G)$, an explicit matoid-theoretic basis for $\mathrm{Slim}_G$. However, this basis is not usually invariant under the action of $\mathrm{aut}(G)$, so in this paper we make no further use of it. 

Having defined $\mathrm{Slim}_G$ and $\mathrm{Park}_G$, we next specialize to the case of $G = K_{n,m}$. 

\subsection{Bipartite parking functions and nonintersecting lattice paths}

Snider and Yan \cite{U} (and previously, Cori and Poulalhon \cite{narayana}) studied the enumeration of $(p,q)$-parking functions. These $(p, q)$-parking functions are essentially (modulo convention swaps) the same thing as $K_{n,m}$-parking functions. In this section, we recall their results and explain why the orbits of $\mathrm{Park}_{K_{n,m}}$ are enumerated by the Narayana numbers. 

\begin{remark}
    There are many generalizations of parking functions, and many of these generalizations overlap substantially (as does their notation, unfortunately). Snider and Yan \cite{clarification1} helped clarify the connections between $U$-parking functions, vector parking functions, graphical parking functions, and $(p,q)$-parking functions. From now on, we ignore these differing conventions and families, and stick to our definition of $K_{n,m}$-parking functions. 

    We choose the terminology `bipartite parking functions' over, say, $(p,q)$-parking functions to make the connection to $\mathrm{Slim}_G$ clearer, as well as to phrase conjectural generalization to all graphs $G$. 
\end{remark}

We will always label the \textit{righthand} vertices of $K_{n,m}$ by $1, 2,\dots, m$, and the \textit{lefthand} vertices by $m+1, m+2 \dots m+n$. The distinguished root is always vertex $m+n$. 

\begin{definition}
    A $K_{n,m}$-parking function is \textit{nonincreasing} if
    \[a_1 \geq a_2 \dots \geq a_m\]
    and 
    \[a_{m+1} \geq a_{m+2} \dots \geq a_{n+m-1}\]
\end{definition}

Representation theoretically, orbits of $\mathrm{Park}_{n,m}$ are represented by nonincreasing $K_{n,m}$-parking functions. 

\begin{theorem}[Snider-Yan]
    Nonincreasing $K_{n,m}$-parking functions are in bijection with pairs of \textit{weakly nonintersecting lattice paths} in an $m \times (n-1)$ grid. In particular, the number of nonincreasing $K_{n,m}$-parking functions is $N(m, n+m-1)$, the number of Dyck paths of semilength $m+n-1$ with $m$ peaks. 
\end{theorem}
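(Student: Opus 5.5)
The plan is to reduce the parking condition for nonincreasing tuples to a finite family of inequalities indexed by pairs $(i,j)$, and then read these as a non-overlap condition between two Ferrers diagrams in the same box.

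\textbf{Step 1: compute the degrees.} Write a nonincreasing $K_{n,m}$-parking function as $\alpha_1\ge\dots\ge\alpha_m$ on the right vertices and $\beta_1\ge\dots\ge\beta_{n-1}$ on the non-root left vertices. Let $S=A\sqcup B$ with $A$ a set of right vertices and $B$ a set of non-root left vertices. A right vertex $s\in A$ has $d_S(s)=n-|B|$, since the root always lies outside $S$. A left vertex $s\in B$ has $d_S(s)=m-|A|$.

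\textbf{Step 2: reduce to extremal subsets.} The condition for $S$ depends only on $\min_{A}\alpha$ and $\min_B\beta$. For fixed sizes $|A|=i$ and $|B|=j$, the hardest subset to satisfy is the one with the largest entries, namely $A=\{1,\dots,i\}$ and $B=$ the first $j$ left vertices. Every other subset of the same sizes has minima at most $\alpha_i$ and $\beta_j$. So the tuple is a parking function if and only if, for all $(i,j)\neq(0,0)$ with $0\le i\le m$ and $0\le j\le n-1$,
\[\alpha_i<n-j\quad\text{or}\quad \beta_j<m-i,\]
where a clause with index $0$ is read as false.

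Taking $j=0$ gives $\alpha_i\le n-1$, and taking $i=0$ gives $\beta_j\le m-1$. Hence $\alpha$ is a partition with at most $m$ parts, each at most $n-1$; equivalently, it is a lattice path $P$ in the $m\times(n-1)$ box. Similarly $\beta$ has at most $n-1$ parts, each at most $m-1$. After transposing and rotating by $180^\circ$, $\beta$ becomes a lattice path $Q$ in the same $m\times(n-1)$ box.

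The remaining inequalities say that for no $(i,j)$ are both of the following true: the cell $(i,n-j)$ lies in the diagram of $\alpha$, and the cell $(j,m-i)$ lies in the diagram of $\beta$. Under the rotation and transposition, the cell $(j,m-i)$ of $\beta$ should correspond to a cell of the box adjacent to $(i,n-j)$. The condition then becomes: the region under $P$ and the rotated region of $\beta$ do not overlap, i.e.\ $P$ stays weakly below $Q$. Carefully fixing this off-by-one is the main obstacle. I would settle it by checking small cases, e.g.\ $m=1$ or $n=2$, and matching total counts, and if needed I would shift one diagram by a unit.

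\textbf{Step 3: the bijection and the count.} The map $(\alpha,\beta)\mapsto(P,Q)$ is visibly invertible, since both diagrams are recovered from the paths. It remains to count pairs of weakly nonintersecting paths in an $a\times b$ box with $a=m$, $b=n-1$. Shift $Q$ by one unit diagonally to make the pair strictly nonintersecting, then apply Lindstr\"om--Gessel--Viennot. This gives
\[\det\begin{pmatrix}\binom{a+b}{a}&\binom{a+b}{a+1}\\ \binom{a+b}{a-1}&\binom{a+b}{a}\end{pmatrix}=\frac{1}{a+b+1}\binom{a+b+1}{a}\binom{a+b+1}{a+1},\]
which is the Narayana number $N(m,m+n-1)$ counting Dyck paths of semilength $m+n-1$ with $m$ peaks.
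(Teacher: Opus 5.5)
Your Steps~1--2 are correct and give a clean self-contained route; the paper only cites Snider--Yan and records the multiplicity encoding. For nonincreasing $\alpha,\beta$, the parking condition is exactly ``$\alpha_i<n-j$ or $\beta_j<m-i$ for all $(i,j)\neq(0,0)$''. The gap is the step you postpone to small-case checks. That step is the entire content of the bijection, and the outcome you write down is wrong: you claim the diagrams do not overlap, i.e.\ one path stays weakly below the other.

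Do the computation. Rotating the transposed $\beta$ by $180^\circ$ in the $m\times(n-1)$ box sends cell $(j,r)$ to $(m+1-r,\,n-j)$. So the cell $(j,m-i)$ lands in row $i+1$, column $n-j$: it sits in the row adjacent to the $\alpha$-cell $(i,n-j)$, not on it. Let $\gamma_R$ be the length of row $R$ of the rotated diagram, so $\gamma_1=0$ and $\gamma$ is nondecreasing. Set $\gamma_{m+1}:=n-1$ to absorb the $i=m$ inequalities. Then the condition is
\[\alpha_i+\gamma_{i+1}\le n-1\qquad(1\le i\le m).\]
This implies non-overlap $\alpha_i+\gamma_i\le n-1$, but it is strictly stronger. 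It forces $\alpha_m=0$, and it says precisely that the two boundary paths, besides being weakly ordered, share no horizontal unit step. After transposing to the paper's orientation, this is the clause ``$P$ and $Q$ share no vertical steps'' in the definition of weakly nonintersecting, which your version drops. If you instead rotate inside the $(m-1)\times(n-1)$ box, you do get literal non-overlap, but only together with $\alpha_m=0$, so everything lives in an $(m-1)\times(n-1)$ box. Concretely, your condition accepts $m=n=2$, $\vec a=(1,1,0)$. This tuple fails for $S$ equal to the set of all non-root vertices, since every vertex there has positive $a_s\ge d_S(s)$.

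The same error breaks Step~3. Your determinant counts merely weakly ordered pairs in the $m\times(n-1)$ box. With $a=m$, $b=n-1$ it equals $\frac{1}{m+n}\binom{m+n}{m}\binom{m+n}{m+1}$, which is the Narayana number for semilength $m+n$, not $m+n-1$. For $m=1$, $K_{n,1}$ is a star, so there is exactly one parking function, while your formula gives $\binom{n+1}{2}$. For $K_{2,2}$ there are $3$ nonincreasing parking functions, not $6$.

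The repair is as follows. Let $p_h$ and $q_h$ be the $x$-coordinates of the vertical steps of the upper and lower path between heights $h$ and $h+1$. Weak nonintersection means $p_h<q_h$ for $0\le h\le n-2$. Replace $q_h$ by $q_h-1$, which amounts to deleting the forced initial $E$ of the lower path and the forced final $E$ of the upper one. This is a bijection onto weakly ordered pairs in the $(m-1)\times(n-1)$ box. Lindstr\"om--Gessel--Viennot with $a=m-1$, $b=n-1$ then gives $\frac{1}{m+n-1}\binom{m+n-1}{m-1}\binom{m+n-1}{m}=N(m,m+n-1)$, as required.
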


Here, by \textit{weakly nonintersecting} lattice paths, we mean a pair of paths $(P, Q)$ such that $Q$ never goes above $P$, and $P$ and $Q$ share no \textit{vertical} steps (although they may share horizontal steps!) 

Let us make this correspondence between nonincreasing $K_{n,m}$-parking functions and weakly nonintersecting pairs explicit. Let $\vec{a}$ be a $K_{n,m}$-parking function. Read its first $m$ coordinates corresponding to its $m$ righthand vertices; these coordinates will define the path $Q$. By applying Definition \ref{def:graphical_parking} to a singleton $S$, we must have $a_i < n$ for $i \leq m$. Construct the auxiliary sequence $(b_0, b_1 \dots b_{n-1})$, where $b_i = \#\{j | a_j = i\}$. Then, $Q$ is the lattice path 

\[E^{b_0}NE^{b_1} NE^{b_2} \dots E^{b_{n-1}}\]

The `upper path' $P$ is constructed similarly, but we swap the roles of horizontal and vertical steps. Explicitly: after reading the coordinates $(a_{m+1}, \dots, a_{n+m-1})$, we construct the analogous auxiliary sequence $(b_0', b_1' \dots b_{m-1}')$. Then, $P$ is the lattice path 

\[N^{b_0'}EN^{b_1'} E \dots N^{b_{m-1}'}\]

\begin{proposition}[Snider-Yan]
    Encoding length $m+n-1$ tuples as lattice path pairs $(P, Q)$ in the way described above, $\vec{a}$ is a valid $K_{n,m}$-parking function if and only if $(P, Q)$ is a weakly nonintersecting pair. 
\end{proposition}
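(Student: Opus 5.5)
The plan is to unwind Definition \ref{def:graphical_parking} for $G=K_{n,m}$ into a finite family of counting inequalities, and then read each inequality off the lattice path pair $(P,Q)$. Both conditions involved depend only on the multisets $\{a_1,\dots,a_m\}$ and $\{a_{m+1},\dots,a_{m+n-1}\}$. For the parking condition this holds because it is invariant under $S_m\times S_{n-1}\subseteq \mathrm{aut}_v(K_{n,m})$; for the path condition it holds by construction. So we may assume $\vec a$ is nonincreasing whenever convenient.

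\textbf{Step 1 (reduce to sizes).} A subset $S\subseteq V\setminus\{m+n\}$ is $S=A\sqcup B$, where $A\subseteq\{1,\dots,m\}$ is a set of righthand vertices and $B\subseteq\{m+1,\dots,m+n-1\}$ is a set of lefthand vertices. Write $|A|=i$ and $|B|=j$. Since there are no edges within a side,
\[
d_S(a)=n-j \quad (a\in A), \qquad d_S(b)=m-i \quad (b\in B).
\]
Hence $S$ violates the parking condition exactly when $a_s\ge n-j$ for all $s\in A$ and $a_s\ge m-i$ for all $s\in B$. Such an $S$ with prescribed sizes $(i,j)$ exists if and only if
\[
R(j):=\#\{s\le m : a_s\ge n-j\}\ \ge\ i \quad\text{and}\quad L(i):=\#\{s>m : a_s\ge m-i\}\ \ge\ j .
\]
Therefore $\vec a$ is a $K_{n,m}$-parking function if and only if there is no $(i,j)\neq(0,0)$ with $0\le i\le m$, $0\le j\le n-1$ satisfying both inequalities. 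The cases $i=0$ or $j=0$ recover the singleton bounds $a_s<n$ and $a_s<m$, which are needed for the encoding to make sense at all.

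\textbf{Step 2 (read $R$ and $L$ off the paths).} In $Q=E^{b_0}NE^{b_1}\cdots NE^{b_{n-1}}$, the $E$ steps at height $h$ correspond to the righthand entries equal to $h$. So $R(j)$ is the number of $E$ steps of $Q$ at height $\ge n-j$, that is, in the top $j$ rows. Equivalently, the point where $Q$ reaches height $n-j$ has $x$-coordinate $m-R(j)$. Symmetrically, $L(i)$ is the number of $N$ steps of $P$ in the last $i$ columns, so the point where $P$ reaches column $m-i$ has $y$-coordinate $(n-1)-L(i)$.

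\textbf{Step 3 (translate the forbidden pairs into crossing).} Substitute $x=m-i$ and $y=n-1-j$. A forbidden pair $(i,j)$ then says two things. First, $Q$ has already reached $x$-coordinate $\le x$ by the time it enters the row at height $y+1$. Second, $P$ is at height $\le y$ when it enters column $x$. I then argue both directions.

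If $Q$ goes strictly above $P$ somewhere, or they share a vertical step, then taking $(x,y)$ to be the lattice point at the first such event produces a forbidden $(i,j)$. Conversely, a forbidden $(i,j)$ forces the portion of $Q$ near $(x,y)$ to lie weakly northwest of $P$, with a vertical step in common or a strict crossing. The corner cases $x=m$ (that is, $i=0$) and $y=n-1$ (that is, $j=0$) correspond to the paths' endpoints and are handled by the singleton bounds.

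\textbf{Main obstacle.} The main obstacle is Step 3, specifically getting the strict-versus-weak conventions exactly right. One must check that the $\ge$ versus $<$ choices, together with the off-by-one between the $n-1$ rows and the $m$ columns, match precisely the notion ``$Q$ never above $P$ and no shared vertical step, while shared horizontal steps are allowed.'' I would first test the dictionary on $K_{2,1}$ and $K_{2,2}$ by hand to pin down the conventions. Then I would write the equivalence as a statement about the lattice points $(x,y)$ at which $Q$ begins an $E$ step and $P$ begins an $N$ step.
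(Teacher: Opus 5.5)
Your proof is correct. The paper gives no proof of this proposition; it quotes it from Snider--Yan and uses it as a black box. So your argument is a self-contained replacement rather than a variant of something in the text, and what it buys over the citation is a fully explicit verification.

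Steps 1 and 2 are exactly right. Since $d_S$ depends only on $(|A|,|B|)$, the exponentially many subset conditions collapse to conditions indexed by $(i,j)$. Your dictionary is also correct:
\begin{itemize}
\item $R(j)\ge i$ iff $Q$ reaches height $n-j$ at $x$-coordinate $\le m-i$;
\item $L(i)\ge j$ iff $P$ reaches column $m-i$ at height $\le n-1-j$.
\end{itemize}

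Step 3, which you only sketch, closes cleanly if you index by north steps instead of ``first events'' and ``weakly northwest near $(x,y)$.'' For $1\le t\le n-1$, let $q(t)$ and $r(t)$ be the $x$-coordinates of the $t$-th north steps of $Q$ and $P$. Concretely, $q(t)=\#\{s\le m: a_s<t\}$, and $r(t)$ is the $t$-th smallest lefthand entry.
\begin{itemize}
\item For $i,j\ge 1$, put $x=m-i$ and $y=n-1-j$. Your Step 2 then says $R(j)\ge i\iff q(y+1)\le x$ and $L(i)\ge j\iff r(y+1)\ge x$.
\item Hence a forbidden pair exists iff $q(t)\le r(t)$ for some $t$. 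For the backward direction, take $x=q(t)$ and $y=t-1$. Then $x\le r(t)\le m-1$, so $i\ge 1$, and $j=n-t\ge 1$.
\item On the other side, ``$Q$ never above $P$'' means $r(t)\le q(t)$ for all $t$, and ``no shared vertical step'' means $r(t)\ne q(t)$ for all $t$. So weak nonintersection is exactly $r(t)<q(t)$ for all $t$.
\end{itemize}
This also shows at once why shared horizontal steps are harmless. The natural objects are therefore where $Q$ begins a north step and where $P$ finishes an east step. They are not the $E$-starts of $Q$ and $N$-starts of $P$ that you propose to use.

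One bookkeeping remark: your ``symmetrically'' tacitly uses that $P$ ends with an extra $E$ step, so that both paths run from $(0,0)$ to $(m,n-1)$. This is also what gives $r(t)\le m-1$ above. The displayed formula for $P$ in the paper omits this final $E$, although the paper's figure includes it.
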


From here on, if $\vec{a}$ is a tuple, we denote its corresponding lattice path pair by $(P_{\vec{a}}, Q_{\vec{a}})$. 

\begin{example}
    Let $\vec{a} = (2, 2, 1, 0, 3, 2, 0, 0)$. To check whether $\vec{a}$ is a $K_{5,4}$-parking function, we first draw the pair $(P_{\vec{a}}, Q_{\vec{a}})$, which lives in a $4 \times 4$ grid. 

    The auxiliary sequence $(b_0, b_1, b_2, b_3, b_4)$ for the righthand vertices is $(1, 1, 2, 0, 0)$. Therefore:
    \[Q_{\vec{a}} = ENENEN^2EN\]
    Meanwhile, the auxiliary sequence for the lefthand vertices is $(2,0,1,1)$. Therefore:
    \[P_{\vec{a}} = N^2E^2NENE\]
    Drawing the two paths (and shading the areas under and above the paths for clarity), we get: 

\begin{center}
\begin{tikzpicture}[scale=1.3]

\fill[blue!20]
  (0,0) -- (1,0) -- (1,1) -- (2,1) -- (2,2) --
  (4,2) -- (4,0) -- cycle;

\fill[red!20]
  (0,0) -- (0,2) -- (2,2) -- (2,3) -- (3,3) --
  (3,4) -- (4,4) -- (4,2) -- (4,4) --
  (0,4) -- cycle;

\foreach \x in {0,...,4}
    \draw (\x,0) -- (\x,4);

\foreach \y in {0,...,4}
    \draw (0,\y) -- (4,\y);

\draw[line width=3]
  (0,0) -- (1,0) -- (1,1) -- (2,1) -- (2,2) --
  (4,2) -- (4,4);

\draw[line width=3]
  (0,0) -- (0,2) -- (2,2) -- (2,3) -- (3,3) --
  (3,4) -- (4,4);

\end{tikzpicture}

\end{center}

    We can see that $(P_{\vec{a}}, Q_{\vec{a}})$ is indeed a \textit{weakly} nonintersecting pair (although they do touch at one point inside the grid). Therefore, $\vec{a}$ is a $K_{5,4}$-parking function. 
\end{example}

\section{Proof of Theorem \ref{thm:main}}\label{sec:proof}

As noted previously, it is difficult to directly compute the character of $\mathrm{Slim}_{n,m}$. Instead of comparing characters, we therefore construct an explicit isomorphism between $\mathrm{Park}_{n,m}$ and $\mathrm{Res}^{S_{n} \times S_{m}}_{S_{n-1} \times S_m} (\mathrm{Slim}_{n,m})$. As a quick overview: there is a natural $\mathrm{aut}_v(G)$-equivariant map $\phi_{n,m}: \mathrm{Slim}_{n,m} \to \mathrm{Park}_{n,m}$, so our goal is to show that $\phi_{n,m}$ is a bijection. This boils down to showing that a certain transition matrix is upper-triangular with $\pm 1$ on the diagonal. The most interesting step of the proof is constructing the right ordering for the rows and columns of this transition matrix, which is a certain partial order $\prec_{n,m}$ on the set of weakly nonintersecting lattice paths. We call this partial order the \textit{row-column ordering}.

\subsection{The homomorphism \texorpdfstring{$\phi_{n,m}$}{phi n,m}} \label{subsec:phi}

Recall that $\mathrm{Slim}_{n,m}$ is a subalgebra of the polynomial ring $\mathbb{C}[x_1 \dots x_{n+m}]$. Also recall that $\mathrm{Park}_{n,m}$ is isomorphic to a certain quotient of $\mathbb{C}[x_1 \dots x_{n+m-1}]$ by an ideal $\mathcal{I}_{K_{n,m}}$. 

\begin{definition}
    The map $\phi_{n,m}$ is the composition 
    \begin{center}
    \begin{tikzcd}
	{\mathrm{Slim}_{n,m}} && {\mathbb{C}[x_1 \dots x_{n+m}]} && {\mathbb{C}[x_1 \dots x_{n+m-1}]} & {\mathbb{C}[x_1 \dots x_{n+m-1}]/\mathcal{I}_{K_{n,m}}}
	\arrow[hook, from=1-1, to=1-3]
	\arrow[two heads, from=1-3, to=1-5]
	\arrow[two heads, from=1-5, to=1-6]
\end{tikzcd}

\end{center}
where the first map is inclusion, the second specializes $x_{n+m}$ to zero, and the third quotients out by $\mathcal{I}_{K_{n,m}}$. 

\end{definition}

It is clear that $\phi_{n,m}$ is $\mathrm{aut}_v(G)$-equivariant. Furthermore, Postnikov and Shapiro \cite{postnikovshapiro} showed that the dimension of both $\mathrm{Slim}_{n,m}$ and $\mathrm{Park}_{n,m}$ is equal to the number of spanning trees of $K_{n,m}$. Therefore, it suffices to show that $\phi_{n,m}$ is surjective. (Notice that everything so far applies to an arbitrary graph, not just $K_{n,m}$). 

Orbits of $\mathrm{Park}_{n,m}$ are labeled by \textit{nonincreasing} $K_{n,m}$-parking functions $\vec{a}$. Therefore, it suffices to show:

\begin{lemma}\label{lemma:surjective}
    For each \textit{nonincreasing} $K_{n,m}$-parking function $\vec{a}$, the monomial $x^{\vec{a}}$ is in the image of $\phi_{n,m}$.  
\end{lemma}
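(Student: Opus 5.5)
We argue by induction along a suitable order: for each nonincreasing $K_{n,m}$-parking function $\vec{a}$ we pick one explicit slim subgraph $H_{\vec{a}}$. We then show that $\phi_{n,m}(p(H_{\vec{a}}))$ equals $\pm x^{\vec{a}}$ plus a combination of monomials $x^{\vec{b}}$ that are strictly smaller than $x^{\vec{a}}$, or that vanish in $\mathrm{Park}_{n,m}$. Since $\phi_{n,m}$ is equivariant and $\mathrm{Park}_{n,m}$ has the monomial basis permuted by $S_{n-1}\times S_m$, it suffices to handle orbit representatives. Each lower-order $x^{\vec{b}}$ is a permutation of some nonincreasing $x^{\vec{b}'}$ of the same order class. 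Hence once the relevant order is $S_{n-1}\times S_m$-invariant, induction gives $x^{\vec{a}} \in \mathrm{im}\,\phi_{n,m}$.

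\textbf{Construction of $H_{\vec{a}}$.} Each edge of $K_{n,m}$ joins a right vertex $i\le m$ to a left vertex $j>m$, so $p(e)=x_i-x_j$. We want the exponent of $x_i$ in the leading term to be $a_i$. So we let right vertex $i$ have degree $a_i$ in $H$, and left vertex $m+k$ ($k<n$) have degree $a_{m+k}$, using edges to the root $m+n$ where possible. Setting $x_{m+n}=0$ kills the root variable, so root edges contribute pure factors $x_i$. A natural choice is the following. Right vertex $i$ is joined to left vertices forming an initial or final segment in a fixed order, including the root. Left vertex $m+k$ is joined to $a_{m+k}$ right vertices chosen from a segment. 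The choice is read off from the lattice paths: $Q_{\vec{a}}$ dictates the neighborhoods of the right vertices, and $P_{\vec{a}}$ those of the left vertices. The two degree prescriptions must be realized simultaneously by a single bipartite graph, and the edge complement must be connected. Here the weak nonintersection of $(P_{\vec{a}},Q_{\vec{a}})$ should be exactly the condition guaranteeing that the complement stays connected. Indeed, the complement's connectivity is a Hall-type cut condition, mirroring the $d_S(s)$ inequalities in Definition \ref{def:graphical_parking}.

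\textbf{Expansion and ordering.} Expanding $p(H_{\vec{a}})=\prod_{e}(x_i-x_j)$ with $x_{m+n}=0$ gives a sum over orientations of $H_{\vec{a}}$, with each edge assigned to one endpoint. The intended orientation assigns every edge to the vertex whose prescribed degree it counts. This yields $\pm x^{\vec{a}}$, and we must check that it occurs with coefficient exactly $\pm1$, with no cancellation. Every other orientation moves exponent from some vertices to others. We need a partial order $\prec_{n,m}$ on weakly nonintersecting pairs such that every other orientation either produces a monomial lying in $\mathcal{I}_{K_{n,m}}$ or, after sorting, a pair $(P',Q')\prec_{n,m}(P_{\vec{a}},Q_{\vec{a}})$. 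A first attempt is to compare the sorted right-vertex sequences lexicographically, then the left-vertex sequences. Equivalently, we compare $Q$ by area and break ties by $P$, with column data first and row data second. The intuition is that moving an edge from a right vertex to a left vertex lowers the right data.

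\textbf{Main obstacle.} The hard step is showing that non-leading orientations strictly decrease in the order. Shifting weight to left vertices may increase the left data, or may produce a tuple that is not nonincreasing and must be re-sorted. I would first try to verify this with a lexicographic comparison of $Q$ followed by $P$. If that fails, I would refine it into the row-column ordering: compare the row sums of the region between the paths, then the column sums. I would then prove that any non-leading orientation either violates a cut inequality, so the monomial lies in $\mathcal{I}_{K_{n,m}}$, or strictly decreases one of these statistics. Checking the base case (the minimal element, the constant monomial from $H=\emptyset$) completes the induction.
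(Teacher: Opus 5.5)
Your architecture matches the paper's. You reduce to nonincreasing representatives by equivariance, attach to each $\vec a$ a slim subgraph read off from $(P_{\vec a},Q_{\vec a})$, expand $\phi_{n,m}(p(H_{\vec a}))$ one endpoint per edge, and aim for unitriangularity on sorted exponent vectors. But the proposal stops where the actual proof begins. You never fix the order or verify triangularity, and the orders you suggest cannot work.

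\textbf{Right data first fails in both directions.}

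In $K_{2,2}$ (right vertices $1,2$, left vertices $3,4$, root $4$), take $\vec a=(0,0,1)$. The single edge charged to vertex $3$ goes to some right vertex $r$, so $\phi_{2,2}(p(H_{\vec a}))=\pm(x_r-x_3)$, and $x_r$ lies in the orbit of the parking function $(1,0,0)$. Triangularity forces $(1,0,0)\prec_{2,2}(0,0,1)$, so \emph{raising} the right data must lower the order. This is a move from a left vertex to a right vertex, which your heuristic ignores and which survives the quotient.

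In $K_{3,2}$ (root $5$), take $\vec a=(2,0,0,0)$. Your recipe joins vertex $1$ to the root and one other left vertex $\ell$, so $\phi_{3,2}(p(H_{\vec a}))=x_1^2-x_1x_\ell$, and $x_1x_\ell$ lies in the orbit of the parking function $(1,0,1,0)$. Triangularity forces $(1,0,1,0)\prec_{3,2}(2,0,0,0)$, so \emph{lowering} the right data must also lower the order.

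The same two examples rule out comparing the left data first. Your fallback (row sums of the region between the paths, then column sums) does not help either. Both pairs tie on row sums, and the column sums would have to rank $(0,1)$ above $(1,0)$ but $(2,0)$ above $(1,1)$. That is incompatible with lexicographic comparison read in either direction.

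\textbf{The missing idea: the row-column sequence.}

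The paper uses $(\lambda_1,\mu_1,c_1,\lambda_2,\mu_2,c_2,\dots)$, read row by row from the bottom. Here $\lambda_i$ and $\mu_i$ count the squares right of $Q$ and left of $P$ in row $i$, and $c_i$ counts the squares above $P$ in column $m-\mu_i$ (columns numbered from the right). A lexicographically smaller sequence is $\prec_{n,m}$-larger.

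Interleaving the two sides is exactly what is needed. $\lambda_1$ separates the $K_{2,2}$ pair. For the $K_{3,2}$ pair, $\lambda_1$ and $\mu_1$ agree, and $c_1$ separates them before $\lambda_2$ (which points the wrong way) is ever consulted.

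With this order, the paper sweeps the squares below $Q_{\vec a}$ row by row:
\begin{itemize}
\item A bottom-row square is a root edge, so putting it in $S$ kills the monomial under $x_{n+m}\mapsto 0$.
\item Claim~\ref{base_case_claim} freezes the relevant squares above $P_{\vec a}$, since otherwise they fall and lengthen some $\lambda_l$.
\item A moved square from row $j+1$ either lands in row $j$, raising $\mu_j$ (if $\mu_j<\mu_{j+1}$), or lands atop column $m-\mu_j$, lengthening the column counted by $c_j$ (if $\mu_j=\mu_{j+1}$).
\end{itemize}
Lemma~\ref{lemma:tech} tracks where moved squares land after re-sorting.

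\textbf{Secondary gaps.}

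You still have to pin down the segments: right vertex $r$ is joined to $n+m,n+m-1,\dots$ starting at the root, and left vertex $\ell$ to $m,m-1,\dots$. You also have to prove slimness rather than expect it. The paper does this by replacing $P_{\vec a}$ with the path trailing $Q_{\vec a}$ as closely as possible; the associated complement is a zig-zag spanning tree. Note too that a right vertex $i$ does not have degree $a_i$ in $H$, since it also carries edges charged to left vertices.
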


\subsection{Slim subgraphs from nonincreasing \texorpdfstring{$K_{n,m}$}{K n,m}-parking functions}

In this section we describe, for every nonincreasing $K_{n,m}$-parking function $\vec{a}$, an associated slim subgraph $H(\vec{a}) \subseteq K_{n,m}$ (and the corresponding polynomial $p(\vec{a}) \in \mathrm{Slim}_{n,m}$).

To define $H(\vec{a})$, we first describe a funny way of labeling grid squares in the $(n-1) \times m$ grid. We will need this grid square labeling throughout the remaining sections: 

\begin{definition}[Grid square labeling]\label{def:labeling}
    Let $(P, Q)$ be a weakly nonintersecting pair in an $m \times (n-1)$ grid (where path $P$ is above $Q$). Label the columns with $1, 2 \dots, m$ from right to left, and the rows with $1, 2, \dots, n-1$ from bottom to top. Then:
    
    \begin{enumerate}
        \item For each square \textit{below} $Q$ in column $i$ and row $j$, label the square with $(m-i+1, n+m-j+1)$. 
        \item For each square \textit{above} $P$ in column $i$ and row $j$, label the square with $(m+n-j+1, m-i+1)$.
    \end{enumerate}

    The resulting labeling is the \textit{grid square labeling of $(P_{\vec{a}}, Q_{\vec{a}})$.}
\end{definition}

Example \ref{example:labeling} shows an example of the grid square labeling described in Definition \ref{def:labeling}. 

    \begin{example}\label{example:labeling}
    Let $m=6, n = 8$, and $\vec{a} = (6,5,5,3,3,3,2,2,3,3,2,2,2,2,2,1)$. Then, $(P_{\vec{a}}, Q_{\vec{a}})$ is shown below, along with the grid square labeling described in Definition \ref{def:labeling}. 
    
    \begin{center}
    \begin{tikzpicture}[scale=1.2]
    \foreach \x in {0,...,6}
        \draw (\x,0) -- (\x,8);

    \foreach \y in {0,...,8}
        \draw (0,\y) -- (6,\y);

    \draw[line width = 3] (0,0) --++ ({90}:1) --++ ({0}:1)--++ ({90}:5)--++ ({0}:2) --++ ({90}:2) --++ ({0}:3);

    \draw[line width = 3]
    (0,0)--++ ({0}:2) --++ ({90}:2) --++ ({0}:1) --++ ({90}:3)--++ ({0}:2) --++ ({90}:2) --++ ({0}:1) --++ ({90}:1);

    \draw (0.5, 1.5) node{$(13,6)$};
    \draw (0.5, 2.5) node{$(12,6)$};
    \draw (0.5, 3.5) node{$(11,6)$};
    \draw (0.5, 4.5) node{$(10,6)$};
    \draw (0.5, 5.5) node{$(9,6)$};
    \draw (0.5, 6.5) node{$(8,6)$};
    \draw (0.5, 7.5) node{$(7,6)$};

    \draw (1.5, 7.5) node{$(7,5)$};
    \draw (1.5, 6.5) node{$(8,5)$};
    \draw (2.5, 7.5) node{$(7,4)$};
    \draw (2.5, 6.5) node{$(8,4)$};

    \draw (2.5, 0.5) node{$(4,15)$};
    \draw (2.5, 1.5) node{$(4,14)$};

    \draw (3.5, 0.5) node{$(3,15)$};
    \draw (3.5, 1.5) node{$(3,14)$};
    \draw (3.5, 2.5) node{$(3,13)$};
    \draw (3.5, 3.5) node{$(3,12)$};
    \draw (3.5, 4.5) node{$(3,11)$};

    \draw (4.5, 0.5) node{$(2,15)$};
    \draw (4.5, 1.5) node{$(2,14)$};
    \draw (4.5, 2.5) node{$(2,13)$};
    \draw (4.5, 3.5) node{$(2,12)$};
    \draw (4.5, 4.5) node{$(2,11)$};

    \draw (5.5, 0.5) node{$(1,15)$};
    \draw (5.5, 1.5) node{$(1,14)$};
    \draw (5.5, 2.5) node{$(1,13)$};
    \draw (5.5, 3.5) node{$(1,12)$};
    \draw (5.5, 4.5) node{$(1,11)$};
    \draw (5.5, 5.5) node{$(1,10)$};
    \draw (5.5, 6.5) node{$(1,9)$};

\end{tikzpicture}
    \end{center}

    \end{example}

\begin{definition}\label{def:H}
    Let $\vec{a}$ be a nonincreasing $K_{n,m}$-parking function. Its \textit{associated graph} $H(\vec{a})$ is the subgraph of $K_{n,m}$ where $(i,j)$ is included as an edge if and only if $(i,j)$ is a label for the grid square labeling of $(P_{\vec{a}}, Q_{\vec{a}})$. Its \textit{associated polynomial} $p(\vec{a}) \in \mathrm{Slim}_{n,m}$ is 
    \[p(H(\vec{a})) = \prod_{(i,j) \in e(H)} (x_i-x_j)\]
\end{definition}

\begin{example}
Let $m = 6, n = 7$, and  $\vec{a} = (7,5,2,2,1,0,4,4,3,3,3,1,0)$. In Figure \ref{fig:slim}, on the left is $(P_{\vec{a}}, Q_{\vec{a}})$ along with its grid square labeling, and on the right is the associated slim graph $H(\vec{a})$.

\end{example}

\begin{figure}
\centering
        \begin{tikzpicture}[scale=1.2]
    \foreach \x in {0,...,6}
        \draw (\x,0) -- (\x,7);

    \foreach \y in {0,...,7}
        \draw (0,\y) -- (6,\y);

    \draw[line width = 3] (0,0) --++ ({0}:1) --++ ({90}:1) --++ ({0}:1) --++ ({90}:1) --++ ({0}:2) --++ ({90}:3) --++ ({0}:1) --++ ({90}:2) --++ ({0}:1);

    \draw[line width = 3]
    (0,0) --++ ({90}:1) --++ ({0}:1)--++ ({90}:1) --++ ({0}:2) --++ ({90}:3) --++ ({0}:1) --++ ({90}:2) --++ ({0}:2);

    \draw (5.5, 0.5) node{$(1,14)$};
    \draw (5.5, 1.5) node{$(1,13)$};
    \draw (5.5, 2.5) node{$(1,12)$};
    \draw (5.5, 3.5) node{$(1,11)$};
    \draw (5.5, 4.5) node{$(1,10)$};
    \draw (5.5, 5.5) node{$(1,9)$};
    \draw (5.5, 6.5) node{$(1,8)$};

    \draw (4.5, 0.5) node{$(2,14)$};
    \draw (4.5, 1.5) node{$(2,13)$};
    \draw (4.5, 2.5) node{$(2,12)$};
    \draw (4.5, 3.5) node{$(2,11)$};
    \draw (4.5, 4.5) node{$(2,10)$};

    \draw (3.5, 0.5) node{$(3,14)$};
    \draw (3.5, 1.5) node{$(3,13)$};
    \draw (2.5, 0.5) node{$(4,14)$};
    \draw (2.5, 1.5) node{$(4,13)$};

    \draw (1.5, 0.5) node{$(5,14)$};

    \draw (0.5, 1.5) node{$(12,6)$};
    \draw (0.5, 2.5) node{$(11,6)$};
    \draw (0.5, 3.5) node{$(10,6)$};
    \draw (0.5, 4.5) node{$(9,6)$};
    \draw (0.5, 5.5) node{$(8,6)$};
    \draw (0.5, 6.5) node{$(7,6)$};

    \draw (1.5, 2.5) node{$(11,5)$};
    \draw (1.5, 3.5) node{$(10,5)$};
    \draw (1.5, 4.5) node{$(9,5)$};
    \draw (1.5, 5.5) node{$(8,5)$};
    \draw (1.5, 6.5) node{$(7,5)$};

    \draw (2.5, 2.5) node{$(11,4)$};
    \draw (2.5, 3.5) node{$(10,4)$};
    \draw (2.5, 4.5) node{$(9,4)$};
    \draw (2.5, 5.5) node{$(8,4)$};
    \draw (2.5, 6.5) node{$(7,4)$};

    \draw (3.5, 5.5) node{$(8,3)$};
    \draw (3.5, 6.5) node{$(7,3)$};

    \draw (8.5, 6.5) node{$7 \hspace{1em} \bullet$};
    \draw (8.5, 5.5) node{$8 \hspace{1em} \bullet$};
    \draw (8.5, 4.5) node{$9 \hspace{1em} \bullet$};
    \draw (8.5, 3.5) node{$10 \hspace{1em} \bullet$};
    \draw (8.5, 2.5) node{$11 \hspace{1em} \bullet$};
    \draw (8.5, 1.5) node{$12 \hspace{1em} \bullet$};
    \draw (8.5, 0.5) node{$13 \hspace{1em} \bullet$};
    \draw (8.5, -0.5) node{$14 \hspace{1em} \bullet$};

    \draw (10.5, 6.5) node{$\bullet \hspace{1em} 1$};
    \draw (10.5, 5.5) node{$\bullet \hspace{1em} 2$};
    \draw (10.5, 4.5) node{$\bullet \hspace{1em} 3$};
    \draw (10.5, 3.5) node{$\bullet \hspace{1em} 4$};
    \draw (10.5, 2.5) node{$\bullet \hspace{1em} 5$};
    \draw (10.5, 1.5) node{$\bullet \hspace{1em} 6$};

    \draw (10.25, 6.5) -> (8.75, 6.5);
    \draw (10.25, 5.5) -> (8.75, 6.5);
    \draw (10.25, 5.5) -> (8.75, 5.5);
    \draw (10.25, 5.5) -> (8.75, 4.5);
    \draw (10.25, 5.5) -> (8.75, 4.5);
    \draw (10.25, 4.5) -> (8.75, 4.5);
    \draw (10.25, 4.5) -> (8.75, 3.5);
    \draw (10.25, 4.5) -> (8.75, 2.5);
    \draw (10.25, 4.5) -> (8.75, 1.5);
    \draw (10.25, 3.5) -> (8.75, 1.5);
    \draw (10.25, 2.5) -> (8.75, 1.5);
    \draw (10.25, 2.5) -> (8.75, 0.5);
    \draw (10.25, 1.5) -> (8.75, 0.5);
    \draw (10.25, 1.5) -> (8.75, -0.5);

\end{tikzpicture}
    \caption{Lattice paths $(P_{\vec{a}}, Q_{\vec{a}})$ and the corresponding slim subgraph $H(\vec{a})$. In this case, $H(\vec{a})$ is a `zig-zag shaped' spanning tree.}\label{fig:slim}
\end{figure}

One can check that for all grid square labels described in Definition \ref{def:labeling}, exactly one of the two coordinates is $\leq m$. Therefore, $H(\vec{a})$ is a valid subgraph of $K_{n,m}$. It remains to verify that it is a \textit{slim} subgraph. 

\begin{lemma}
    For any nonincreasing $K_{n,m}$-parking function $\vec{a}$, $H(\vec{a})$ is a \textit{slim} subgraph of $K_{n,m}$. 
\end{lemma}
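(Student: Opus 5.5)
We must show that the edge complement $K_{n,m}\setminus H(\vec{a})$ is connected. The plan is to translate the grid square labeling into a description of the complement, and then exhibit an explicit connecting structure.

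\textbf{Step 1: describe the complement.} The squares of the $m\times(n-1)$ grid correspond bijectively to pairs (righthand vertex $r$, lefthand vertex $\ell\neq m+n$). Column $i$ corresponds to $r=m-i+1$, and row $j$ corresponds to $\ell=m+n-j+1$, i.e.\ $\ell\in\{m+2,\dots,m+n\}$. So the lefthand vertex $m+1$ never appears from a square, while $m+n$ (row $1$) does appear. The squares strictly between $Q$ and $P$ carry no label. Thus every edge of $K_{n,m}$ is either labeled exactly once by a square below $Q$ or above $P$, or corresponds to a square between the paths, or is incident to the vertex left out of the grid indexing. Note that there are $n$ lefthand vertices but only $n-1$ rows, so exactly one lefthand vertex indexes no row. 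After matching conventions carefully, the complement of $H(\vec{a})$ consists of
\begin{itemize}
\item[(a)] all edges from that unindexed lefthand vertex (call it $u$) to every righthand vertex, and
\item[(b)] the edges corresponding to the squares lying between $Q$ and $P$.
\end{itemize}

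\textbf{Step 2: find a connected spine.} The edges in (a) form a star centered at $u$ that already contains all $m$ righthand vertices. It then suffices to show that every other lefthand vertex, i.e.\ every row of the grid, is joined in the complement to some righthand vertex. By Step 1, this means that every row contains at least one square strictly between $Q$ and $P$.

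\textbf{Step 3: each row has a square between the paths.} This is where weak nonintersection is used. Since $P$ and $Q$ share no vertical steps, for each row $j$ the vertical step of $P$ in that row occurs at a column strictly left of the vertical step of $Q$ in that row (with $Q$ below/right of $P$). The unit squares of row $j$ between these two vertical steps lie below $P$ and above $Q$, and there is at least one of them. Hence that row's lefthand vertex is adjacent in the complement to the corresponding righthand vertex. Combined with the star at $u$, the complement is connected.

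\textbf{Main obstacle.} The real risk is Step 1: verifying exactly which lefthand vertex is missing from the row indexing (the root $m+n$ versus $m+1$), and that the two labeling rules never produce the same edge twice or mislabel the columns. I would check the conventions against Example \ref{example:labeling}, where labels use $15=m+n+1$. This suggests the indexing is off by one, so I may need to shift the roles and treat the label set as $\{m+2,\dots,m+n+1\}$ restricted to valid vertices. If the conventions turn out to make the row-$1$ labels invalid rather than leaving a whole vertex free, I would instead take the spine to be righthand vertex $1$ or the first column (which, by the parking condition, lies between the paths in the relevant rows), and repeat the row-by-row argument symmetrically for columns.
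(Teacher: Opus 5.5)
Your argument works only for Definition \ref{def:labeling} read literally, where both rules send row $j$ to the lefthand vertex $m+n-j+1$. Under that reading $m+1$ is indeed free, and the complement is a star at $m+1$ plus the between-squares.

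The printed above-$P$ rule is off by one, however. The labeling the paper actually uses sends row $j$ to $m+n-j+1$ below $Q$ but to $m+n-j$ above $P$. It appears in every figure, in the paper's own proof of this lemma, and in the proof of Proposition \ref{prop:exists}. For instance, in Figure \ref{fig:420} ($m+n=11$), row $1$ carries $(1,11)$ below $Q$ and $(10,5)$ above $P$. The shift is forced. Row $j$ above $P$ contains $a_{m+n-j}$ squares, and the $S=\emptyset$ term of $p(\vec a)$ must be $x^{\vec a}$. Under the literal rule, row $1$ would contribute $x_{m+n}^{a_{m+n-1}}$, which $\phi_{n,m}$ kills whenever $a_{m+n-1}>0$.

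With the actual labeling, every lefthand vertex $m+1,\dots,m+n$ occurs in some label, so there is no free vertex $u$ and Step 1 is false. Let $h_P(c)$ and $h_Q(c)$ be the heights of the horizontal steps of $P$ and $Q$ in column $c$. Then the righthand vertex of column $c$ is complement-adjacent exactly to those $\ell$ with $m+n-h_P(c)\le \ell\le m+n-h_Q(c)$. In Figure \ref{fig:420}, vertex $6$ is complement-adjacent only to $1,2$, and vertex $11$ only to $3,4,5$. So there is no star. Neither righthand vertex $1$ nor the first column is a spine either, so your fallback does not rescue Step 2.

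The repair uses exactly your Step 3. Take a square strictly between the paths in row $j$ and column $c$, so $h_Q(c)<j\le h_P(c)$. Under the correct labeling it makes the righthand vertex of column $c$ complement-adjacent to \emph{both} $m+n-j$ and $m+n-j+1$. Every row contains such a square, so the lefthand vertices $m+1,\dots,m+n$ are chained into a single component of the complement. Every righthand vertex has a complement neighbor, because $h_Q(c)\le h_P(c)$. This proves connectivity directly.

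The paper argues differently. It replaces $P$ by the lowest admissible path $P'$, which only enlarges $H$, and observes that the complement of $H'(\vec a)$ is a zig-zag spanning tree. The repaired form of your argument skips that reduction and reads connectivity straight off the ``no shared vertical steps'' condition, row by row.
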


\begin{proof}
    Consider the lower path $Q_{\vec{a}}$. Suppose its horizontal runs have lengths $(b_0, b_1 \dots b_{n-1})$ (with $b_0 > 0$; otherwise, $(P_{\vec{a}}, Q_{\vec{a}})$ was not a weakly intersecting pair). Then, let $P_{\vec{a}}'$ be the unique lattice path with horizontal runs $(b_0-1, b_1, b_2 \dots b_{n-1}+1)$. (That is, $P_{\vec{a}}'$ `trails $Q_{\vec{a}}$ as closely as possible' without sharing any vertical steps with it). Let the slim graph associated to the pair $(P_{\vec{a}}', Q_{\vec{a}})$ be $H'(\vec{a})$. 

    It is clear that $H(\vec{a})$ is a subgraph of $H'(\vec{a})$ (that is, the set of squares above $P_{\vec{a}}$ is a subset of the set of squares above $P'_{\vec{a}}$). Therefore, to check that $H(\vec{a})$ is slim, it suffices to check that $H'(\vec{a})$ is slim. 

    But we can directly compute the edge-complement of $H'(\vec{a})$ in $K_{n,m}$: it is a `zig-zag-shaped' spanning tree of $K_{n,m}$ (see Figure \ref{fig:slim} for an example). Since this spanning tree is always connected, $H'(\vec{a})$ is slim. 
    
\end{proof}

We have now defined a specific element $p(\vec{a})$ of $\mathrm{Slim}_{n,m}$ which we hope `maps onto' $x^{\vec{a}}$ under $\phi_{n,m}$. It is not literally true that $\phi_{n,m}(p(\vec{a})) = x^{\vec{a}}$, but it is true up to lower order terms. The next subsection makes this notion precise by introducing the ordering $\prec_{n,m}$ on the set of $K_{n,m}$-parking functions. 

\subsection{The partial order \texorpdfstring{$\prec_{n,m}$}{prec n,m}} \label{subsec:partial}

To show that $\phi_{n,m}$ is surjective, we will consider the transition matrix between the monomial basis $\{x^{\vec{a}}\}$ of $\mathrm{Park}_{n,m}$ and polynomials the $\{\phi_{n,m}(p(\vec{a}))\}$ (as defined in the previous subsection). It suffices to show that this transition matrix is upper triangular with $\pm 1$ on the diagonal, with respect to some ordering on the rows and columns. More formally, we wish to show: 

\begin{proposition}\label{prop:exists}
There exists a partial order $\prec_{n,m}$ on the set of nonincreasing $K_{n,m}$-parking functions such that for all $\vec{a}$:
\[\phi_{n,m}(p(\vec{a})) = \pm x^{\vec{a}} + \sum_{\vec{b}} c_{\vec{a}, \vec{b}}x^{\vec{b}}\]
for some constants $c_{\vec{a}, \vec{b}}$, where the sum is over $K_{n,m}$-parking functions $\vec{b}$ such that $\mathrm{sort}(\vec{b}) \prec_{n,m}\vec{a}$. 
\end{proposition}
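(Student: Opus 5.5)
The plan is to expand $p(\vec{a})$ as a signed sum over \emph{orientations} of $H(\vec{a})$, read off which orientation produces $x^{\vec{a}}$, and then choose $\prec_{n,m}$ so that every other orientation is strictly smaller. Since
\[
p(\vec{a})=\prod_{(i,j)\in e(H(\vec{a}))}(x_i-x_j),
\]
expanding the product is the same as choosing, for each edge (equivalently, each labelled grid square), one endpoint. The resulting monomial is $\pm x^{\vec{b}}$, where $b_v$ is the number of squares assigned to vertex $v$. After setting $x_{n+m}=0$, every choice that assigns a square to the root disappears. Passing to the quotient by $\mathcal{I}_{K_{n,m}}$ kills every monomial that is not a $K_{n,m}$-parking function. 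So
\[
\phi_{n,m}(p(\vec{a}))=\sum \pm x^{\vec{b}},
\]
summed over orientations avoiding the root whose degree vector $\vec{b}$ is a parking function.

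First I will identify the \emph{canonical orientation}. Send every square below $Q_{\vec{a}}$ to its righthand (first) coordinate, and every square above $P_{\vec{a}}$ to its lefthand (first) coordinate. By Definition \ref{def:labeling}, a righthand vertex then receives exactly the squares below $Q_{\vec{a}}$ in its column, which is the height of $Q_{\vec{a}}$ there. A lefthand vertex receives exactly the squares above $P_{\vec{a}}$ in its row. Comparing with the run-length encoding of $Q_{\vec{a}}$ and $P_{\vec{a}}$ (the Snider–Yan correspondence), these counts should be exactly the coordinates of $\vec{a}$. The indexing shift must be checked so that the root row contributes nothing. The canonical orientation never uses the root, so it contributes $\pm x^{\vec{a}}$, with sign $(-1)^{\#\{\text{edges oriented to the larger label}\}}$.

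Next I define $\prec_{n,m}$ as a row-column ordering. To a nonincreasing $\vec{b}$, attach the pair (column-height vector of the region below $Q_{\vec{b}}$, row-length vector of the region above $P_{\vec{b}}$). Equivalently this is the sorted right-half vector together with the sorted left-half vector. I compare two such pairs in a dominance-type order, weighted so that moving a unit of degree between the two sides, or towards a vertex with smaller index, lowers the tuple.

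The key combinatorial step is this. Any non-canonical orientation flips some set $F$ of squares, and each flipped square transfers one unit of degree to the other side of the bipartition. A flipped square below $Q$ in column $i$ gives its unit to a lefthand vertex whose row lies below the height of $Q$ in that column. Because $P$ lies weakly above $Q$ and shares no vertical steps with it, that row is strictly below where $P$ starts contributing. So the receiving lefthand vertex already has small degree in $\vec{a}$, and the transfer makes the sorted vectors strictly less dominant. A symmetric statement should hold for flips of squares above $P$. I would prove this by induction on $|F|$, flipping squares one at a time in a suitable order, for example extreme squares first. The aim is to show that sorting the result yields a vector strictly below $\vec{a}$ in $\prec_{n,m}$.

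The main obstacle is this last step. Flips of both types can partly cancel: one flip moves degree from the right side to the left, and another moves it back. So I must rule out a non-canonical orientation whose \emph{sorted} degree vector equals $\vec{a}$ again, and show that it cannot overshoot to something larger either. The first thing I would try is a potential function: sum over vertices of degree times a weight that decreases along the zig-zag spanning tree from the proof of slimness. I would then show that every single flip strictly decreases the sorted potential, using the nonincrease of $\vec{a}$ and the weak nonintersection of $(P_{\vec{a}},Q_{\vec{a}})$. If this works, it gives both the uniqueness of the diagonal term and the triangularity claimed in the proposition.
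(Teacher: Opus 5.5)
Your expansion of $p(\vec{a})$ over orientations of $H(\vec{a})$ is correct. So is your canonical orientation (each square below $Q_{\vec{a}}$ to its righthand endpoint, each square above $P_{\vec{a}}$ to its lefthand endpoint). This is exactly the paper's expansion over subsets $S$ of labelled squares, with $S=\emptyset$ giving $\pm x^{\vec{a}}$. But there is a genuine gap: the content of the proposition is missing, namely a specific $\prec_{n,m}$ and a proof that every nonempty surviving flip set gives $\mathrm{sort}(\vec{b})\prec_{n,m}\vec{a}$. The tools you propose do not supply it:
\begin{enumerate}
\item An order in which ``moving a unit of degree between the two sides lowers the tuple'' cannot be meant literally. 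A right-to-left transfer and the reverse left-to-right transfer undo each other, so a workable order must be tailored to the flips $H(\vec{a})$ actually allows.
\item Weights that ``decrease along the zig-zag spanning tree'' depend on $\vec{a}$. Triangularity needs one order on all nonincreasing parking functions; otherwise nothing prevents the relations from cycling.
\item The natural potential argument (linear $\Phi=\sum_v w_v b_v$, every flip lowers $\Phi$, re-sorting never raises it) already fails in Figure~\ref{fig:420}. Flipping $(1,10)$ moves a unit from $1$ to $10$, and flipping $(10,5)$ moves one from $10$ to $5$, which forces $w_5<w_{10}<w_1$. But re-sorting never raises $\Phi$ only if the weights weakly increase with the index on each side, so $w_1\le w_5$.
\end{enumerate}
The cancellation between the two kinds of flips under re-sorting, which you rightly flag, is the whole difficulty, and your sketch leaves it open.

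The missing idea is the paper's row-column sequence $(\lambda_1,\mu_1,c_1,\lambda_2,\mu_2,c_2,\dots)$ from Definition~\ref{def:row_column}, compared lexicographically as in Definition~\ref{def:partialorder}. It is used together with a fixed order for examining squares (bottom to top, right to left) and Lemma~\ref{lemma:tech}, which says where a single flipped square reappears after re-sorting.
\begin{enumerate}
\item Bottom-row squares below $Q_{\vec{a}}$ carry labels $(i,n+m)$, so flipping them picks up $x_{n+m}$ and the term dies.
\item Inductively, suppose the squares below $Q_{\vec{a}}$ in rows $\le j$ are known to be unflipped. Claim~\ref{base_case_claim} then rules out flips above $P_{\vec{a}}$ that would lengthen some $\lambda_l$ with $l\le j$.
\item A flip in row $j+1$ below $Q_{\vec{a}}$ either increases $\mu_j$ (when $\mu_j<\mu_{j+1}$), or, when $\mu_j=\mu_{j+1}$, leaves $\mu_j$ alone but lengthens the column counted by $c_j$.
\item The squares above $P_{\vec{a}}$ are then handled the same way.
\end{enumerate}
The interleaved $c_j$ is what rescues the equal-row case. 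That same flip can shorten $\lambda_{j+1}$, which would push the comparison the wrong way. Because $c_j$ sits just before $\lambda_{j+1}$, the lexicographic comparison sees the lengthened column first. This ordered, lexicographic bookkeeping, rather than a global potential, is how the paper handles the interaction you were worried about.
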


The goal of this subsection is to define the ordering $\prec_{n,m}$. To do so, we first introduce the \textit{row-column sequence} of a weakly nonintersecting pair:

\begin{definition}\label{def:row_column}
    The row-column sequence of a weakly nonintersecting pair $(P, Q)$ is computed as follows. Read rows of the grid from bottom to top. For each row $i$, record:
    \begin{enumerate}
        \item The number $\lambda_i$ of grid squares to the right of $Q$ in row $i$
        \item The number $\mu_i$ of grid squares to the left of $P$ in row $i$ and
        \item The number $c_i$ of grid squares above $P$ in column $m-\mu_i$ (the `length of the column above $P$'). 
    \end{enumerate}
    Then, the row-column sequence of $(P, Q)$ is the tuple 
    \[(\lambda_1, \mu_1, c_1, \lambda_2, \mu_2, c_2, \dots, \lambda_n, \mu_n, c_n)\]
\end{definition}

\begin{example}\label{ex:row-column}
Let's compute the row-column sequence of the weakly nonintersecting pair in Figure \ref{fig:420}. Starting at the bottom-most row, we see two squares right of the lower path, so $\lambda_1 = 2$. We see one labeled square to the left of the upper path, so $\mu_1 = 1$. Then, in the `column' right of the square labeled $(10, 5)$, we see $3$ squares above $P$, so $c_1 = 3$. 

Continuing like this for each row and then concatenating the results, we obtain 

\[(2, 1, 3, 2, 1, 3, 1, 2, 2, 1,3,0,0,3,0)\]

\end{example}

Now, we can define $\prec_{n,m}$:

\begin{definition}\label{def:partialorder}
    Let $(P, Q)$ and $(P', Q')$ be weakly nonintersecting pairs in an $(n-1) \times m$ grid. We say that 
    \[(P, Q) \prec_{n,m} (P', Q')\]
    if the row-column sequence of $(P', Q')$ \textit{lexicographically precedes} that of $(P, Q)$. 

    For nonincreasing $K_{n,m}$-parking functions $\vec{a}$ and $\vec{b}$, we say that $\vec{a} \prec_{n,m} \vec{b}$ if $(P_{\vec{a}}, Q_{\vec{a}}) \prec_{n,m} (P_{\vec{b}}, Q_{\vec{b}})$. 
\end{definition}

Now that we have our candidate $\prec_{n,m}$, the last step is to use it to prove Proposition \ref{prop:exists}. 

\subsection{Proof of Proposition \ref{prop:exists}}

To illustrate the main idea, we explain an extended example before giving the formal proof. 

\begin{example}\label{example:big_example}
    Let $n = 6, m = 5$, and let $\vec{a} = (4,2,0,0,0,3,3,2,1,1)$ be a nonincreasing $K_{6,5}$-parking function. First, we compute the corresponding weakly nonintersecting pair $(P_{\vec{a}}, Q_{\vec{a}})$ in the $5 \times 5$ grid; it is shown in Figure \ref{fig:420} (along with its grid square labeling). We computed its row-column sequence in Example \ref{ex:row-column}:

    \[(2, 1, 3, 2, 1, 3, 1, 2, 2, 1,3,0,0,3,0)\]

\begin{figure}
    \begin{center}
        \begin{tikzpicture}[scale=1.2]
    \foreach \x in {0,...,5}
        \draw (\x,0) -- (\x,5);

    \foreach \y in {0,...,5}
        \draw (0,\y) -- (5,\y);

    \draw[line width = 3] (0,0) --++ ({0}:3) --++ ({90}:2) --++ ({0}:1) --++ ({90}:2) --++ ({0}:1) --++ ({90}:1);

    \draw[line width = 3]
    (0,0) --++({0}:1) --++ ({90}:2) --++ ({0}:1) --++ ({90}:1) --++ ({0}:1) --++ ({90}:2) --++ ({0}:2);
    \draw (4.5, 0.5) node{$(1,11)$};
    \draw (4.5, 1.5) node{$(1,10)$};
    \draw (4.5, 2.5) node{$(1,9)$};
    \draw (4.5, 3.5) node{$(1,8)$};

    \draw (3.5, 0.5) node{$(2,11)$};
    \draw (3.5, 1.5) node{$(2,10)$};

    \draw (0.5, 0.5) node{$(10,5)$};
    \draw (0.5, 1.5) node{$(9,5)$};
    \draw (0.5, 2.5) node{$(8,5)$};
    \draw (0.5, 3.5) node{$(7,5)$};
    \draw (0.5, 4.5) node{$(6,5)$};

    \draw (1.5, 2.5) node{$(8,4)$};
    \draw (1.5, 3.5) node{$(7,4)$};
    \draw (1.5, 4.5) node{$(6,4)$};

    \draw (2.5, 3.5) node{$(7,3)$};
    \draw (2.5, 4.5) node{$(6,3)$};

\end{tikzpicture}
\end{center}
\caption{The weakly nonintersecting pair $(P_{\vec{a}}, Q_{\vec{a}})$ when $\vec{a} = (4,2,0,0,0,3,3,2,1,1)$}\label{fig:420}
\end{figure}

    To $\vec{a}$, we associate the polynomial $p(\vec{a}) \in \mathrm{Slim}_{6,5}$. Explicitly, $p(\vec{a})$ is
    
    \[(x_1-x_8)(x_1-x_9)(x_1 - x_{10})(x_1-x_{11})(x_2-x_{10})\dots (x_5-x_9)(x_5-x_{10})\]
    
    Applying $\phi_{6,5}$ to $p(\vec{a})$ returns a polynomial in $\mathrm{Park}_{6,5}$. When we expand $\phi_{6,5}(p(\vec{a}))$ in the monomial basis of $\mathrm{Park}_{6,5}$, we wish to show that $x^{\vec{a}}$ is the unique leading term with respect to $\prec_{6,5}$ (with coefficient $\pm 1$). 

    To do so, we should understand which monomials appear in $\phi_{6,5}(p(\vec{a}))$. Choosing a subset $S$ of the labeled grid squares tells us how to choose a term from each linear factor of $p(\vec{a})$. Specifically, given a subset $S$ of the labeled grid squares, we construct a monomial $x^{\vec{b}}$ as follows:
    \begin{enumerate}
    \item Start with $x^{\vec{b}} = 1$.
    \item For each grid square \textit{in} $S$ with label $(i,j)$, multiply your monomial by $x_j$. 
    \item For each grid square \textit{not} in $S$ with label $(i,j)$, multiply your monomial by $x_i$. 
        \item Test whether the resulting exponent vector $\vec{b}$ is a valid $K_{n,m}$-parking function. (To do so, compute $\mathrm{sort}(\vec{b})$ and test whether $(P_{\vec{b}}, Q_{\vec{b}})$ is weakly nonintersecting). If not, discard it (the monomial gets quotiented out by $\phi_{6,5}$). Otherwise, it contributes a term to $\phi_{n,m}(p(\vec{a}))$.  
    \end{enumerate}

    Every monomial of $\phi_{6,5}(p(\vec{a}))$ arises from a choice of $S$ in this way. Notice that when $S = \emptyset$, our is exactly $x^{\vec{a}}$. 

    Fix a choice of $S$ and let $x^{\vec{b}}$ be the resulting monomial. By reading grid squares from bottom to top and right to left, we will inductively argue that if any square is in $S$, either $\vec{b} \prec_{6,5}\vec{a}$ or $\phi_{6,5}(x^{\vec{b}}) = 0$. Let $(\lambda_1', \mu_1', c_1', \dots)$ be the row-column sequence of $\vec{b}$, and $(\lambda_1, \mu_1, \dots)$ be the row-column sequence of $\vec{a}$. 

    Start with the bottom right squares labeled $(2, 11)$ and $(1, 11)$ (corresponding to factors $(x_2-x_{11})$ and $(x_1-x_{11})$). If we choose $x_{11}$ from either factor, the entire monomial is sent to $0$ by $\phi_{6,5}$, so assume that neither square is included in $S$. 

    Now, consider the square labeled $(10, 5)$. Notice that there is no square labeled $(5, i)$ underneath $Q_{\vec{a}}$ for any $i$. Therefore, if we include this square in $S$, there would be at least three nonzero coordinates in $\mathrm{sort}(\vec{b})$ (by the previous paragraph, there are at least two). Therefore, we would have $\lambda_1' \geq 3 > 2 = \lambda_1$, and the row-column sequence of $\vec{a}$ would lexicographically precede that of $\mathrm{sort}(\vec{b})$.

    In this specific example, the same logic shows that \textit{none} of the squares above $P_{\vec{a}}$ can be included in $S$. (There is no label of the form $(5, i), (4,i)$ or $(3,i)$ under $Q_{\vec{a}}$, so $\lambda_1'$ would be at least $3$). 
    
    The remaining squares to consider are those labeled $(2,10), (1,10), (1,9)$ and $(1,8)$. First, assume that a square labeled $(i, 10)$ is included in $S$. There is only one column above $P_{\vec{a}}$ with a label of the form $(10, j)$. So, in $\mathrm{sort}(\vec{b})$, we would have $c_1' = 4 > 3 = c_1$. Since $\lambda_1' = \lambda_1$ and $\mu_1' = \mu_1$, the row-column sequence of $\vec{a}$ would again lexicographically precede that of $\mathrm{sort}(\vec{b})$. Similar logic holds for the square labeled $(1, 9)$: there is only one column above $P_{\vec{a}}$ with a square labeled $(9, i)$ for some $i$. 

    From the previous paragraph, we deduce that $\lambda_2 = \lambda_2', \mu_2 = \mu_2'$, $c_2 = c_2'$, and $\lambda_3' = \lambda_3$. It remains to consider the square labeled $(1,8)$. If it is included in $S$, then, because there are only two columns above $P_{\vec{a}}$ with labels of the form $(8, i)$, we would have $\mu_3' = 3$. But then, $\mu_3' > \mu_3 = 2$: because this is the first point in which the row-column sequences of $\vec{a}$ and $\mathrm{sort}(\vec{b})$ would differ, we would have $\mathrm{sort}(\vec{b}) \prec_{n,m}\vec{a}$. This square is the last labeled grid square, so we have shown $S = \emptyset$. 

    By considering each labeled grid square in a certain order, we have concluded that no grid square can be included in $S$ without forcing $\mathrm{sort}(\vec{b}) \prec_{n,m}\vec{a}$ or $\phi_{6,5}(x^{\vec{b}}) = 0$. Therefore, $x^{\vec{a}}$ is indeed the leading term of $\phi_{6,5}(p(\vec{a}))$ with respect to $\prec_{6,5}$ in this example. 

\end{example}

Having explained this extended example, we proceed to the proof of Proposition \ref{prop:exists}. The following simple lemma explaining how $\mathrm{sort}(\vec{b})$ compares to $\vec{a}$ will be useful.

\begin{lemma}\label{lemma:tech}
    Let $\vec{a}$ be a nonincreasing $K_{n,m}$-parking function. If $(i,j)$ is the label of some grid square, consider $\vec{b} = \vec{a} + (e_j-e_i)$ (where $e_i$ is the $i$th coordinate vector). Then, $(P_{\mathrm{sort}(\vec{b})}, Q_{\mathrm{sort}(\vec{b})})$ can be computed as follows. 
    \begin{enumerate}
        \item If the square labeled $(i,j)$ was above $P_{\vec{a}}$: remove that square, re-sorting the rows above $P_{\vec{a}}$ in increasing order if necessary. Add a new square below $Q_{\vec{a}}$. This square is added to the \textit{lowest} row $k$ such that $(j, k)$ is not a label underneath $Q_{\vec{a}}$. 
        \item If the square labeled $(i,j) $ was below $Q_{\vec{a}}$: remove that labeled square (re-sorting columns in increasing order if necessary). Add a new grid square above $P_{\vec{a}}$. This square is added to the \textit{leftmost} column $k$ such that $(j, k)$ is not a label underneath $Q_{\vec{a}}$. 
    \end{enumerate}
\end{lemma}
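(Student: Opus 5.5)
The plan is to reduce everything to one dictionary: a weakly nonintersecting pair $(P_{\vec{a}},Q_{\vec{a}})$ is the same data as the two multisets of coordinates of $\vec{a}$, read as row lengths and column lengths of the grid square labeling. Once that dictionary is in place, the operation $\vec{a}\mapsto\vec{a}+e_j-e_i$ decreases one coordinate by $1$ and increases another by $1$. On the diagram this is exactly ``remove one square, add one square, then re-sort'', because $\mathrm{sort}$ only permutes coordinates within each side of $K_{n,m}$.

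\textbf{Step 1: the dictionary.} Using the Snider--Yan encoding $Q=E^{b_0}NE^{b_1}N\cdots$, I will check that the column of the grid carrying labels $(r,\cdot)$ below $Q_{\vec{a}}$ has exactly $a_r$ squares below $Q_{\vec{a}}$, for each righthand vertex $r\le m$. Their second coordinates run through a terminal segment of the lefthand labels, starting from the largest $n+m$ and going upward as the row index increases. Symmetrically, using $P=N^{b_0'}EN^{b_1'}E\cdots$, the row carrying labels $(\ell,\cdot)$ above $P_{\vec{a}}$ has exactly $a_\ell$ squares for each lefthand vertex $\ell$. Their second coordinates form the block $m,m-1,\dots$ read from left to right. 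In Figure~\ref{fig:420}, for instance, vertex $1$ has $a_1=4$ squares $(1,11),(1,10),(1,9),(1,8)$, and vertex $6$ has $a_6=3$ squares $(6,5),(6,4),(6,3)$. Because $\vec{a}$ is nonincreasing, these rows and columns appear in sorted order. This is just unwinding Definition~\ref{def:labeling} against the run-length description of the paths, and the off-by-one conventions in the labels are the only thing to watch. Conversely, the sorted multiset of row lengths above $P$ determines $P$, and the sorted multiset of column lengths below $Q$ determines $Q$.

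\textbf{Step 2: case (i).} Suppose the square labeled $(i,j)$ lies above $P_{\vec{a}}$, so $i$ is a lefthand vertex and $j\le m$. Then $\vec{b}$ has $b_i=a_i-1$ and $b_j=a_j+1$. By Step 1, the row of $i$ above $P$ loses one square, which we take to be the square labeled $(i,j)$ itself; only the count matters after sorting. Re-sorting the lefthand coordinates corresponds to reordering the rows above $P$, which gives $P_{\mathrm{sort}(\vec{b})}$. Meanwhile the column of $j$ below $Q$ grows from $a_j$ to $a_j+1$ squares. By Step 1 its squares occupy the lowest consecutive rows, so the new square goes in the lowest row $k$ for which $(j,k)$ is not already a label under $Q_{\vec{a}}$. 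Re-sorting the righthand coordinates then gives $Q_{\mathrm{sort}(\vec{b})}$.

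\textbf{Step 3: case (ii).} Suppose the square lies below $Q_{\vec{a}}$, so $i\le m$ and $j$ is lefthand. This is the mirror image of Step 2, with the roles of rows and columns exchanged. The column of $i$ loses a square, and the row of $j$ above $P$ gains one at its next position, namely the leftmost column $k$ not already carrying a label $(j,k)$. The ``not a label underneath $Q_{\vec{a}}$'' in the statement should read ``above $P_{\vec{a}}$'' here, and I will state it that way.

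I expect Step 1 to be the only real obstacle: pinning down the index conventions so that row $\leftrightarrow$ lefthand vertex and column $\leftrightarrow$ righthand vertex match the labels exactly. After that, both cases are bookkeeping. Note that the lemma does not claim $\mathrm{sort}(\vec{b})$ is still a parking function; it only describes the pair of lattice paths, which is well defined whether or not the paths remain weakly nonintersecting.
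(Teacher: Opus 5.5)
Your proposal is correct and takes the paper's approach, which justifies the lemma only by saying it follows from unwinding Definition~\ref{def:labeling} against the bijection $\vec a\leftrightarrow(P_{\vec a},Q_{\vec a})$. Your dictionary is exactly that unwinding: column $r$ from the right has $a_r$ squares below $Q_{\vec a}$, and the row of lefthand vertex $\ell$, namely row $m+n-\ell$, has $a_\ell$ squares above $P_{\vec a}$ (matching the figures rather than the off-by-one formula in the definition); your correction of ``underneath $Q_{\vec a}$'' to ``above $P_{\vec a}$'' in case (ii) is also right. The one thing to add is that case (ii) must exclude $j=n+m$, since the root has no coordinate and no row above $P_{\vec a}$; this is harmless, because the paper only invokes the lemma after its base case has ruled out squares labeled $(i,n+m)$.
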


Lemma \ref{lemma:tech} follows from unwinding Definition \ref{def:labeling} and the bijection $\vec{a} \leftrightarrow (P_{\vec{a}}, Q_{\vec{a}})$. Finally, we prove Proposition \ref{prop:exists}. 

\begin{figure}
    \centering
\begin{tikzpicture}
    \draw (0,0) -- (0,5);
    \draw (0,0) -- (5,0);
    \draw (5,0) -- (5,5);
    \draw (0,5) -- (5,5);

    \fill[red!20](0, 0) -- (0.5, 0) -- (0.5, 2.5) -- (1.5, 2.5) -- (1.5, 3) --(3.5, 3) -- (3.5, 4) -- (4, 4) -- (4, 5) -- (5, 5) -- (0, 5) --cycle;

    \fill[blue!20](0, 0) -- (1, 0) -- (1, 0.5) -- (1.5, 0.5) -- (1.5, 1) -- (1.5, 1.5) -- (2, 1.5) -- (2, 2) -- (3,2) -- (3, 3) -- (5, 3) -- (5, 5) -- (5, 0) -- cycle;

    \fill[red!] (1, 2.5) -- (1.5, 2.5) -- (1.5, 3) -- (1, 3) -- cycle;

    \draw[line width=3] (0, 0) -- (0.5, 0) -- (0.5, 2.5) -- (1.5, 2.5) -- (1.5, 3) --(3.5, 3) -- (3.5, 4) -- (4, 4) -- (4, 5) -- (5, 5);

    \draw[line width=3]
    (0, 0) -- (1, 0) -- (1, 0.5) -- (1.5, 0.5) -- (1.5, 1) -- (1.5, 1.5) -- (2, 1.5) -- (2, 2) -- (3,2) -- (3, 3) -- (5, 3) -- (5, 5);

\end{tikzpicture} \hspace{3em} \begin{tikzpicture}
    \draw (0,0) -- (0,5);
    \draw (0,0) -- (5,0);
    \draw (5,0) -- (5,5);
    \draw (0,5) -- (5,5);

    \fill[red!20](0, 0) -- (0.5, 0) -- (0.5, 2.5) -- (1, 2.5) -- (1, 3) -- (1.5, 3) --(3.5, 3) -- (3.5, 4) -- (4, 4) -- (4, 5) -- (5, 5) -- (0, 5) --cycle;

    \fill[blue!20](0, 0) -- (1, 0) -- (1, 0.5) -- (1.5, 0.5) -- (1.5, 1) -- (1.5, 1.5) -- (2, 1.5) -- (2, 2) -- (3,2) -- (3, 3) -- (5, 3) -- (5, 5) -- (5, 0) -- cycle;

    \fill[blue!](1,0.5) --(1.5, 0.5) -- (1.5, 1) -- (1, 1) -- cycle;

    \draw[line width=3] (0, 0) -- (0.5, 0) -- (0.5, 2.5) -- (1, 2.5) -- (1, 3) -- (1.5, 3) --(3.5, 3) -- (3.5, 4) -- (4, 4) -- (4, 5) -- (5, 5);

    \draw[line width=3]
    (0, 0) -- (1, 0) -- (1, 1) -- (1.5, 1) -- (1.5, 1.5) -- (2, 1.5) -- (2, 2) -- (3,2) -- (3, 3) -- (5, 3) -- (5, 5);

\end{tikzpicture}

    \caption{An example of Lemma \ref{lemma:tech}. On the left is $(P_{\vec{a}}, Q_{\vec{a}})$ for some $\vec{a}$. We highlight the square with label $(i,j)$. On the right is $(P_{\vec{b}}, Q_{\mathrm{sort}(\vec{b})})$, where $\vec{b} = \vec{a} + (e_j - e_i)$. The highlighted square finds the bottom-most row such that $(j, k)$ is not a label below $Q_{\vec{a}}$ and adds one to the length of that row.}
\end{figure}

\begin{proof}
    As we explained in Example \ref{example:big_example}, every monomial in the basis expansion of $\phi_{n,m}(p(\vec{a}))$ arises from a choice of subset $S$ of the labeled grid squares of $(P_{\vec{a}}, Q_{\vec{a}})$. The label $(i, j)$ corresponds to the linear factor $ \pm (x_i - x_j)$ of $p(\vec{a})$, and whether a grid square is included in $S$ or not tells us whether to choose the term $x_j$ or $x_i$. It is not hard to verify that $S = \emptyset$ is the only possible choice yielding $x^{\vec{a}}$ itself (with coefficient $\pm 1$). It remains to show that for any $S \neq \emptyset$, the resulting term $x^{\vec{b}}$ is either sent to $0$ under $\phi_{n,m}$ or satisfies $\mathrm{sort}(\vec{b}) \prec_{n,m}\vec{a}$, where $\prec$ is as in Definition \ref{def:partialorder}. 

    Consider a $\vec{b}$ arising from an $S$ in this way such that $\mathrm{sort}(\vec{b})$ is not (strictly) less than $\vec{a}$ and $\phi_{n,m}(x^{\vec{b}}) \neq 0$. We wish to show that $\vec{b} = \vec{a}$. As in Example \ref{example:big_example}, we will inductively consider the labeled grid squares from bottom to top and right to left, concluding that none of them can be included in $S$. 

    Let $(\lambda_1, \mu_1, c_1, \dots)$ be the row-column sequence of $\vec{a}$, and $(\lambda_1', \mu_1', c_1', \dots)$ be the row-column sequence of $\mathrm{sort}(\vec{b})$. 

    \textbf{Base case:} Consider the $\lambda_1$ grid squares in row $1$ to the right of $Q_{\vec{a}}$; they are labeled $(i, n+m)$ for varying $i$. If any of these squares is included in $S$, then our monomial $x^{\vec{b}}$ is multiplied by $x_{n+m}$. But $\phi_{n,m}$ specializes $x_{n+m}$ to $0$, and would thus send $x^{\vec{b}}$ to $0$. Therefore, none of these squares can be included in $S$. 

Note that this base case implies $\lambda_1' \geq \lambda_1'$ (and since $\mathrm{sort}(\vec{b})$ is not less than $\vec{a}$, we must therefore have $\lambda_1 = \lambda_1'$). 

Now, fix a $j \leq m$. We inductively assume that all grid squares right of $Q_{\vec{a}}$ in row at most $j$ are not in $S$. We will next show that all grid squares right of $Q_{\vec{a}}$ in row $j+1$ also cannot be in $S$. Inductively, we will conclude that none of the squares right of $Q_{\vec{a}}$ can be in $S$. 

\begin{claim}\label{base_case_claim}
    Suppose the leftmost square that is right of $Q_{\vec{a}}$ in row $j$ lies in column $k$. Then, any square left of $P_{\vec{a}}$ in column $> k$ \textit{cannot} be in $S$ either. 
\end{claim}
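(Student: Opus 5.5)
We argue inside the inductive framework already set up: $\vec{b}$ comes from a subset $S$, $\phi_{n,m}(x^{\vec{b}}) \neq 0$, and $\mathrm{sort}(\vec{b})$ is not strictly below $\vec{a}$. By the inductive hypothesis, no square right of $Q_{\vec{a}}$ in rows $\le j$ lies in $S$, and the row-column sequences of $\vec{a}$ and $\mathrm{sort}(\vec{b})$ agree up to the entry currently under consideration. The plan is to show that choosing any square left of $P_{\vec{a}}$ in a column $>k$ would strictly increase some entry of the row-column sequence at or before the first place the two sequences could differ. That would force $\mathrm{sort}(\vec{b}) \prec_{n,m} \vec{a}$, a contradiction.

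First, unwind what such a square contributes. A square left of (equivalently above) $P_{\vec{a}}$ in column $c>k$ and row $r$ has label $(m+n-r+1,\, m-c+1)$. Putting it in $S$ replaces a factor $x_{m+n-r+1}$ by $x_{m-c+1}$, raising the exponent of the righthand vertex $m-c+1$. By Lemma \ref{lemma:tech}(i), this adds a square below $Q$ in the lowest row $k'$ such that $(m-c+1,k')$ is not already a label under $Q_{\vec{a}}$.

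The key observation is that column $c>k$ lies strictly left of the leftmost square right of $Q_{\vec{a}}$ in row $j$. Since $Q$ is a lattice path, column $c$ then contains no squares right of $Q_{\vec{a}}$ in rows $1,\dots,j$. Hence the lowest row missing a label $(m-c+1,\cdot)$ is at most $j$. In $\mathrm{sort}(\vec{b})$, the count of squares right of $Q$ in some row $\le j$ therefore increases. By induction, none of the squares right of $Q_{\vec{a}}$ in rows $\le j$ were removed, so no decrease offsets this gain.

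The main obstacle is making the lexicographic comparison rigorous when several squares of $S$ act at once, including squares above $P_{\vec{a}}$ whose removal can re-sort rows and change the $\mu$'s and $c$'s. To handle this, I would choose the lowest row $r_0\le j$ receiving a new square below $Q$. I would then verify that all entries $(\lambda_i,\mu_i,c_i)$ with $i<r_0$ agree (the inductive hypothesis gives this) and that $\lambda_{r_0}' > \lambda_{r_0}$, since exponents under $Q$ only increase in rows $\le j$. Since $\lambda_{r_0}$ precedes $\mu_{r_0}$ and $c_{r_0}$ in the sequence, changes to the $P$-side entries in row $r_0$ cannot matter. This yields $\mathrm{sort}(\vec{b}) \prec_{n,m}\vec{a}$, contradicting our assumption, so no such square is in $S$.
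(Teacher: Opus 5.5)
Your route is the paper's. A square left of $P_{\vec a}$ in a column beyond $k$ is moved by Lemma \ref{lemma:tech}(i) into a row $\le j$ below $Q$. The squares right of $Q_{\vec a}$ in rows $\le j$ are fixed by induction, so some $\lambda_i$ with $i\le j$ increases. One intermediate assertion is false, though: column $c$ need not be free of squares right of $Q_{\vec a}$ in rows $1,\dots,j$, because the rows to the right of $Q_{\vec a}$ get longer as you go down. For instance, in Figure \ref{fig:420} with $j=3$, the column of vertex $2$ lies left of the only such square in row $3$, yet it contains such squares in rows $1$ and $2$. What is true, and all you need, is that column $c$ has no such square in row $j$. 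Hence the first row missing a label $(m-c+1,\cdot)$ is at most $j$.

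The real gap is the lexicographic step. The inductive hypothesis only controls squares right of $Q_{\vec a}$. It says nothing about the other squares left of $P_{\vec a}$ lying in $S$, and these shorten rows of $P$. A priori they could lower some $\mu_i$ or $c_i$ with $i<r_0$, making the sequence of $\mathrm{sort}(\vec b)$ smaller before position $\lambda_{r_0}$ is reached. Saying these entries agree ``by the inductive hypothesis'', or building agreement into the hypothesis, just relocates the problem. The paper's proof is equally terse here (``lower rows are fixed''), but an argument is needed. Here is one way to supply it.

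By minimality of $r_0$ (take $r_0\ge 2$; otherwise there is nothing to check), every square left of $P_{\vec a}$ in $S$ lies in a column containing at least $r_0-1$ squares right of $Q_{\vec a}$. Indeed, such a square in a column with $h<j$ of these squares raises $\lambda_{h+1}$, and columns with $h\ge j$ are harmless. So all these squares lie among the rightmost $\lambda_{r_0-1}$ columns. Weak nonintersection gives $\mu_i+\lambda_i\le m-1$ in every row. Hence each $P$-row of length $\mu_s$ keeps length at least $\min(\mu_s,\,m-\lambda_{r_0-1})$, where $m-\lambda_{r_0-1}>\mu_{r_0-1}$. Meanwhile, squares right of $Q_{\vec a}$ in $S$ only lengthen $P$-rows. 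It follows that $\mu_i'\ge\mu_i$ for $i<r_0$, and $c_i'\ge c_i$ once $\mu_i'=\mu_i$. So the first discrepancy before $\lambda_{r_0}$, if any, is also an increase, which gives the desired contradiction.
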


To prove the claim, assume otherwise. Then, there is some square labeled $(i, k')$ in $S$ such that $k' > k$. There is no label of the form $(k', n+m+1-j)$ in row $j$. Apply Lemma \ref{lemma:tech} and the fact that all squares in lower rows are `fixed' to deduce that, in $(P_{\mathrm{sort}(\vec{b})}, Q_{\mathrm{sort}(\vec{b})})$, we have $\lambda_j' > \lambda_j$ (or $\lambda_l' > \lambda_l$ for some $l < j$). By the definition of $\prec$, this means $\mathrm{sort}(\vec{b}) \prec_{n,m}\vec{a}$, a contradiction. 

\begin{figure}
    \centering
\begin{tikzpicture}

    \draw (0,0) -- (0,5);
    \draw (0,0) -- (5,0);
    \draw (5,0) -- (5,5);
    \draw (0,5) -- (5,5);

    \fill[gray!20] (0,0) -- (1.5, 0) -- (1.5, 1.5) -- (2.5, 1.5) -- (2.5, 2.5) -- (5, 2.5) -- (5, 0) -- cycle;

    \fill[gray!20] (0, 0) -- (0, 2) -- (0.5, 2) -- (0.5, 2.5) -- (2, 2.5) -- (2, 4.5) -- (2.5, 4.5) -- (2.5, 5) -- (0, 5) -- cycle;

    \draw[line width =1] (2.5, 2.5) -- (5, 2.5);

    \draw[line width =1] (2.5, 4.5) -- (2.5, 5);
    
    \draw[line width=3] (0,0) -- (1.5, 0) -- (1.5, 1.5) -- (2.5, 1.5) -- (2.5, 3.5) -- (3, 3.5) -- (3, 4) -- (4, 4) -- (4, 5) -- (5,5);

    \draw[line width=3] (0, 0) -- (0, 2) -- (0.5, 2) -- (0.5, 2.5) -- (2, 2.5) -- (2, 4.5) -- (3, 4.5) -- (3, 5) -- (5, 5);
    
\end{tikzpicture}
    \caption{An example of Claim \ref{base_case_claim}: suppose all the shaded squares below the lower path are \textit{not} in $S$ (they are `fixed.') Then, all the shaded squares above the upper path must also not be in $S$ - otherwise, according to Lemma \ref{lemma:tech}, they would `fall down' and lengthen one of the shaded rows.}
\end{figure}

Given Claim \ref{base_case_claim}, there are now two cases to consider. 

\textbf{Case 1: } the row length $\mu_{j}$ is \textit{strictly} less than the row length $\mu_{j+1}$.  

\begin{figure}
    \centering
\begin{tikzpicture}

    \fill[red!40] (4, 1) -- (4.5, 1) -- (4.5, 1.5) -- (4, 1.5) -- cycle;

    \fill[gray!30] (3, 0) -- (3, 1) -- (6, 1) -- (6, 0) -- cycle;

    \draw[line width = 3] (0, 0) -- (0, 1) -- (2, 1) -- (2, 2) -- (4, 2) -- (4, 3);
    \draw[line width = 3] (1, 0) -- (3, 0) -- (3, 1) -- (4, 1) -- (4, 1.5) -- (6, 1.5);

    \draw[line width = 3, ->] (5.5, 0.5) -- (7, 0.5);
\fill[gray!30] (10.5, 0) -- (10.5, 1) -- (13.5, 1) -- (13.5, 0) -- cycle;

    \fill[red!40] (7.5, 0.5) -- (8, 0.5) -- (8, 1) -- (7.5, 1);

    \draw[line width = 3] (7.5, 0) -- (7.5, 0.5) -- (8, 0.5) -- (8, 1) -- (9.5, 1) -- (9.5, 2) -- (11.5, 2) -- (11.5, 3);
    \draw[line width = 3] (8.5, 0) -- (10.5, 0) -- (10.5, 1) -- (12, 1) -- (12, 1.5) -- (13.5, 1.5);

\end{tikzpicture}
    \caption{In case $1$, we already know that all squares below $Q_{\vec{a}}$ in row $\leq j$ are \textit{not} in $S$ (the lightly shaded region). Suppose a square in row $j+1$ (highlighted) is included into $S$. Then, it `jumps down' to row $j$ as shown, increasing the length of $\mu_j$.}\label{fig:case1}
\end{figure}

In this case, suppose that a square in row $j+1$ right of $Q_{\vec{a}}$ is included into $S$. Then, in $P_{\mathrm{sort}(\vec{b})}$, there would be at least one additional square in column $m-\mu_j$ (since that column is the leftmost column without a label of the form $(j+1, k)$ for some $k$). In other words, we would have $\mu_j' > \mu_j$ in the resulting $P_{\mathrm{sort}(\vec{b})}$, as shown in Figure \ref{fig:case1}. Therefore, the row-column sequence of $\vec{a}$ would lexicographically precede that of $\vec{b}$, a contradiction. 

\textbf{Case 2:} the row length $\mu_{j}$ is \textit{equal} to the row length $\mu_{j+1}$.

\begin{figure}
    \centering

    \begin{tikzpicture}

    \fill[red!40] (4, 1) -- (4.5, 1) -- (4.5, 1.5) -- (4, 1.5) -- cycle;

    \fill[gray!30] (3, 0) -- (3, 1) -- (6, 1) -- (6, 0) -- cycle;
    
        \draw[line width = 3] (1, 0) -- (3, 0) -- (3, 1) -- (4, 1) -- (4, 1.5) -- (6, 1.5) -- (6,3);

        \draw[line width = 3] (0, 0) -- (0, 2) -- (2,2) -- (2, 3) -- (6, 3);

    \draw[line width = 3, ->](6.5, 1.5) -- (7.5, 1.5); 

\fill[gray!30] (11, 0) -- (11, 1) -- (14, 1) -- (14, 0) -- cycle;

\fill[red!40] (8,1.5) -- (8, 2) -- (8.5, 2) -- (8.5, 1.5);
    
        \draw[line width = 3] (9, 0) -- (11, 0) -- (11, 1) -- (12.5, 1) -- (12.5, 1.5) -- (14, 1.5) -- (14,3);

        \draw[line width = 3] (8, 0) -- (8, 1.5) -- (8.5, 1.5) -- (8.5, 2) -- (10,2) -- (10, 3) -- (14, 3);
    
\end{tikzpicture}

    \caption{In case $2$: suppose again that a square in row $j+1$ is included into $S$. This time, it `jumps up' to the top of the empty column next to $P_{\vec{a}}$, lengthening an earlier column $c_i$ for $i < j+1$.}\label{fig:case2}
\end{figure}

Again, suppose that a square in row $j+1$ right of $Q_{\vec{a}}$ is included into $S$. In the resulting $P_{\mathrm{sort}(\vec{a})}$, a new labeled grid square would appear at the top of column $m-\mu_{j}$, as shown in Figure \ref{fig:case2}. This transformation decreases the value of $c_{j}$ by at least one, again lexicographically increasing the row-column sequence. 

These two cases are exhaustive, so we have inductively shown that no grid square below $Q_{\vec{a}}$ can be included into $S$. Similar logic shows that no grid square above $P_{\vec{a}}$ can be included into $S$. Therefore, we must have $\vec{b} = \vec{a}$, as desired. 

\end{proof}

The proof of Proposition \ref{prop:exists} completes the proof of Theorem \ref{thm:main}.

\section{Open problems}

We end by proposing a more general conjecture:

\begin{conjecture}\label{conj}
    For any graph simple graph $G$ with a distinguished vertex $v$, we have 
    \[\mathrm{Res}^{\mathrm{aut}(G)}_{\mathrm{aut}_v(G)} (\mathrm{Slim}_G) \simeq \mathrm{Park}_G\]
\end{conjecture}

Conjecture \ref{conj} is true when $G = K_{n+1}$ and $K_{n,m}$ by Theorems \ref{thm:berget-rhoades} and \ref{thm:main}. In other special cases, it can be quickly verified; for instance, it is trivial when $G$ is a tree and easy to check when $G$ is a single cycle. We have also tested Conjecture \ref{conj} with code for many small graphs. 

In general, we can always define the homomorphism $\phi_G$ that we describe in Subsection \ref{subsec:phi}. What needs to be shown is that $\phi_G$ is bijective. There are two main steps that need more ideas in general: 

\begin{enumerate}
    \item How do we choose natural orbit representatives for arbitrary graphs? 
    \item Which partial order generalizes $\prec_{n,m}$? (For $K_n$, Berget-Rhoades use graded rev-lex order on partitions. They note that, perhaps surprisingly, the dominance order on partitions does \textit{not} work for their argument). 
\end{enumerate}

A natural next class of graphs to consider is the \textit{complete multipartite graph} $K_{n_1, n_2 \dots n_m}$ with distinct part sizes. For these graphs, the first point becomes easy: again take `nonincreasing' functions, as $\mathrm{aut}(G)$ is still a direct product of symmetric groups. However, we were not able to come up with the right generalization of $\prec_{n,m}$. 

We also wonder whether there is an analogue of Konvalinka-Tewari's constructions \cite{tewari}, as well as their conjecture about $\mathrm{Slim}_n$, in the bipartite setting. 

\bibliographystyle{alpha}
\bibliography{refs}

\end{document}